\documentclass[12pt,reqno]{amsart}
\usepackage{amscd}
\usepackage{hyperref}
\usepackage{latexsym,amssymb}
\usepackage{enumerate}
\usepackage{color}
\usepackage{amsthm}
\usepackage{amsmath}
\usepackage{graphicx}
\setlength{\textwidth}{150truemm}
\setlength{\textheight}{230truemm}
\setlength{\topmargin}{10truemm}
\hoffset -10truemm
\voffset -21truemm
\parindent 4mm
\parskip 1.2ex plus 0.5ex minus 0.5ex
\newtheorem{corollary}{Corollary}[section]

\newtheorem{lemma}[corollary]{Lemma}
\newtheorem{proposition}[corollary]{Proposition}
\newtheorem{remark}[corollary]{Remark}

\newtheorem{theorem}[corollary]{Theorem}

\newcommand{\mylabel}[1]{\label{#1}
            \ifx\undefined\stillediting
            \else \fbox{$#1$}\fi }
\newcommand{\BE}{\begin{equation}}

\newcommand{\EEQ}{\end{equation}}
\newcommand{\rfb}[1]{\mbox{\rm
   (\ref{#1})}\ifx\undefined\stillediting\else:\fbox{$#1$}\fi}

\newfont{\Blackboard}{msbm10 scaled 1200}

\newfont{\roma}{cmr10 scaled 1200}

\newcommand{\mm}    {{\hbox{\hskip 0.5pt}}}

\newcommand{\bluff} {{\hbox{\raise 15pt \hbox{\mm}}}}
%


%

%


 \allowdisplaybreaks \textwidth=16cm \textheight=23cm
 \oddsidemargin=0cm \evensidemargin=0cm


%


%
%
\makeatletter
\def\section{\@startsection {section}{1}{\z@}{-3.5ex plus -1ex minus
    -.2ex}{2.3ex plus .2ex}{\large\bf}}
\makeatother
%
\def\be{\begin{equation}}
\def\ee{\end{equation}}

%

%


\begin{document}
\thispagestyle{empty}
\title{Null controllability of a population dynamics with interior degeneracy}

\author{Idriss Boutaayamou and Younes Echarroudi}
\thanks{Universit\'{e} Ibn Zohr, Facult\'{e} polydisciplinaire de Ouarzazate, e-mail: dsboutaayamou@gmail.com}
\thanks{D\'epartement de
Math\'ematiques,
Facult\'e des Sciences Semlalia, Laboratoire LMDP, UMMISCO (IRD-UPMC), B. P. 2390 Marrakech 40000, Maroc,
e-mails: yecharroudi@gmail.com}

\subjclass[2000]{35K65, 92D25, 93B05, 93B07}

\begin{abstract}
In this paper, we deal with the null controllability of a population dynamics model with an interior degenerate diffusion. To this end, we proved first a new Carleman estimate for the full adjoint system and afterwards we deduce a suitable observability inequality which will be needed to establish the existence of a control acting on a subset of the space which lead the population to extinction in a finite time.
\end{abstract}
\keywords{ population dynamics model, interior degeneracy, Carleman estimate, observability inequality, null controllability}

\maketitle

\section{Introduction}\label{first-section}
Consider the system
\begin{eqnarray}\label{405}
  {{\partial y} \over {\partial t}} + {{\partial y} \over {\partial a}}-(k(x)y_{x})_{x}+\mu(t,a, x)y = \vartheta\chi_{\omega}  && \text{ in } Q,\\
\nonumber  y(t,a, 1)=y(t,a, 0)=0  && \text{ on }(0,T)\times(0,A),\\
\nonumber y(0, a, x)=y_{0}(a, x)  && \text{ in }Q_{A},\\
\nonumber y(t, 0, x)=\int_{0}^{A} \beta (t,a, x)y (t,a, x)da  &&  \text{ in } Q_{T},
\end{eqnarray}
where $Q=(0,T)\times(0,A)\times(0,1)$, $Q_{A}=(0,A)\times(0,1)$, $Q_{T}=(0,T)\times(0,1)$ and we will denote  $q=(0,T)\times(0,A)\times \omega,$ where $\omega=(x_1, x_2) \subset\subset (0,1)$ is the region  where the control $\vartheta$ is acting. This control corresponds to an external supply or to removal of individuals on the subdomain $\omega$. Since the system \eqref{405} models the dispersion of gene of a given population, then $x$ represents the gene type and $y(t,a,x)$ is the distribution of individuals of age $a$ at time $t$ and of gene type $x$. The parameters $\beta(t,a,x)$ and $\mu(t,a,x)$ are respectively the natural fertility and mortality rates, $A$ is the maximal age of life and $k$ is the gene dispersion coefficient. $y_{0}\in L^{2}(Q_{A})$ is the initial distribution of population. Finally, $\int_{0}^{A} \beta (t,a, x)y (t,a, x)da $ is the distribution of the newborns of the population that are of gene type $x$ at time $t$. As usual, we will suppose that no individual reaches the maximal age $A$. We note that in the most works concerned with the diffusion population dynamics models, $x$ is viewed as the space variable.\\ The population dynamics models in their different aspects attracted many authors and were investigated from many sides (see for example \cite{genni7, genni6, genni4, genni5, Pia, Poz}).
Among those questions, we find the null controllability problem or in general the controllability problems for age and space structured population dynamics models which were studied in a intensive literature basing, in general, on the references interested on the controllability of heat equation (see for instance \cite{Can2, Can3, Can4, Can5, Can6, genni3} for a different controllability problems of heat equation). In this context, we can cite the pioneering items of V. Barbu and al. in \cite{Marcheva}, B. Ainseba and S. Anita in \cite{Ain4, Ain3, Ain2, Ain1}. In \cite{Marcheva}, the authors proved the null controllability for a population dynamics model without diffusion both in the cases of migration and birth control for $T\geq A$ showing directly an appropriate observability inequality for the associated adjoint system and they concluded  that in the case of the migration control, only a classes of age was controlled in contrary with the birth control which allows to steer all population to extinction. In \cite{Ain4, Ain3, Ain2, Ain1}, the diffusion was taken into account in a age-space structured model and the null controllability of \eqref{405} for a classes of age was established in the case where $k=1$ and for any dimension $n$ by means of a weighted estimates called Carleman estimates and exploiting the results gotten for heat equation in \cite{Fursikov}. In \cite{ech}, B. Ainseba and al. studied a more general case allowing the dispersion coefficient to depend on the variable $x$ and verifies $k(0)=0$ (i.e, the coefficient of dispersion $k$ degenerates at 0). The authors tried to obtain \eqref{null-contr} in such a situation with $\beta \in L^{\infty}$ basing on the work done in \cite{Bouss} for the degenerate heat equation to establish a new Carleman estimate for the full adjoint system \eqref{adj-sys} and afterwards his observability inequality. However, the null controllability property of this paper was showed under the condition $T\geq A$ (as in \cite{Marcheva}) and this constitutes a restrictiveness on the "optimality" of the control time $T$ since it means, for example, that for a pest population whose the maximal age $A$ may equal to a many days (may be many months or years) we need much time to bring the population to the zero equilibrium. In the same trend and to overcome the condition $T\geq A$, L. Maniar et al in \cite{man} suggested the fixed point technique in which the birth rate $\beta$ must be in $C^{2}(Q)$ specially in the proof of \cite[Proposition 4.2]{man}. Such a technique consists briefly to demonstrate in a first time the null controllability for an intermediate system with a fertility function $b\in L^{2}(Q_{T})$ instead of $ \int_{0}^{A} \beta (t,a, x)y (t,a, x)da $ and to achieve the task via a Leray-Schauder Theorem.
\\Thereby, the main goal of the current paper is to deal with the null controllability property with a minimum of regularity of $\beta$ (see \eqref{hyp-beta-mu}) and a positive small control time $T$ taking into account that $k$ depends on the gene type and degenerates at a point $x_{0}\in\omega$, i.e $k(x_{0})=0$, e.g $k(x)=|x-x_{0}|^{\alpha}$. To be more accurate, for a fixed $T\in (0, \delta)$ with $\delta\in(0,A)$ small enough, we investigate the existence of a suitable control $\vartheta\in L^{2}(q)$ which depends on $y_{0}$ and $\delta$ and such that the associated solution $y$ of \eqref{405} satisfies
\begin{equation}\label{null-contr}
y(T,a, x)=0, \quad  \text{ a.e. in } (\delta, A)\times(0,1).
\end{equation}
If $k(x_{0})=0$ in a point $x_{0}\in\omega$, we say that \eqref{405} is a population dynamics model with interior degeneracy. Genetically speaking, the meaning of $k(x_{0})=0$ is that the gene of type $x_{0}\in(0,1)$ can not be transmitted from the studied population to its offspring. This objective will be attained via the classical procedure following the strategy of \cite{genni}. On other words, we will establish an appropriate observability inequality for the full adjoint system of \eqref{405} which is an outcome of a suitable Carleman estimate. We highlight that such a result can be shown if we replace the homogeneous Dirichlet boundary conditions by the ones of Neumann, i.e $y_{x}(t,a,0)=y_{x}(t,a,1)=0, (t,a)\in(0,T)\times(0,A)$ using the same way done in \cite{bout}. Another interesting null controllability problem of \eqref{405} can be elaborate using the work of Fragnelli et al. in \cite{genni2} arising in the case when the potential term admits an interior singularity belonging to gene type domain.\\
The remainder of this paper is organized as follows: in Section \ref{second-section}, we will provide the well-posedness of \eqref{405} and give the proof of the Carleman estimate of its adjoint system. The Section \ref{third-section} will be devoted to the observability inequality and hence we get the null controllability result \eqref{null-contr}. The last section will take the form of an appendix where we will bring out a Caccioppoli's inequality which plays an important role to show the desired Carleman estimate.
\section{Well-posedness and Carleman estimate results}\label{second-section}
\subsection{Well-posedness result}
For this section and for the sequel, we assume that the dispersion coefficient $k$ verifies
\begin{equation}\label{hyp-k}
\left\{
\begin{array}{l}
\exists x_{0} \in (0,1),k \in  C([0,1])\cap C^{1}([0,1]\backslash \{x_{0}\}),\;\;
 k>0 \text{ in }[0,1]\backslash \{x_{0}\} \text{ and } k(x_{0})=0,\\
\exists \gamma \in [0,1):  (x-x_{0})k'(x)\leq\gamma k(x), \;  x \in [0,1]\backslash \{x_{0}\}.
\end{array}
\right.
\end{equation}
It is well-known in the literature of degenerate problems that there exist two kinds of degeneracy namely the weakly degenerate and the strong degenerate problems, in our study we will restrict ourselves to the first one and this fact explains the choice of $\gamma\in [0,1)$ which in fact are associated to the Dirichlet boundary conditions (see \cite[Hypothesis 1.1]{genni}). On the other hand, the last hypothesis on $k$ means in the case of $k(x)=|x-x_{0}|^\alpha$ that $0\leq \alpha<1$.\\ The investigation of \eqref{null-contr} needs also the following assumptions on the natural rates $\beta$ and $\mu$
\begin{equation}\label{hyp-beta-mu}
\begin{cases}
\mu, \beta \in L^{\infty}(Q),\quad
\beta(t,a,x), \mu(t,a,x)\geq0,  \text{ a.e. in }Q,\\
\beta(.,0,.)\equiv0 \hspace{0.25cm}\text{ a.e. in } (0,T)\times(0,1).
\end{cases}
\end{equation}
The last assumption in \eqref{hyp-beta-mu} is natural since the newborns are not fertile. Also, it is worth mentioning to point out that, as in \cite{ech} we do not need to require that $\mu$ satisfies an hypotheses like $\int_{0}^{A}\mu(t-s,A-s,x)ds=+\infty, \quad  (t,x) \in [0,T]\times[0,1]$ since it does not play any role on the well-posedness result and the computations concerning the proofs of our controllability result as well. However, we will suppose that no individual can reach the maximal age $A$ as mentioned in the introduction. In the same context, we emphasize that in \cite{man}, the $L^{\infty}-$regularity of $\beta$ is sufficient to prove the well posedness of the studied model which is exactly our case. To this end, we introduce the following weighted Sobolev spaces:
\begin{equation*}\label{Sob-spaces}
\left\{
\begin{array}{l}
H^{1}_{k}(0, 1):=\{u \in L^{2}(0,1):u \text{ is abs.  cont. in } [0,1]:  \sqrt{k}u_{x} \in L^{2}(0,1), u(1)=u(0)=0\},\\
H^2_{k} (0, 1):=\Big\{ u \in H^1_{k}(0, 1) \,: \, k(x)u_x
\in H^1(0, 1)\Big\},
\end{array}
\right.
\end{equation*}
endowed respectively with the norms
\begin{equation*}\label{norme}
\left\{
\begin{array}{l}
\|u\|^{2}_{H^{1}_{k}(0, 1)}
:=\|u\|^{2}_{L^{2}(0,1)}+ \|\sqrt{k}u_{x}\|^{2}_{L^{2}(0,1)}, \quad u \in H^{1}_{k}(0, 1),\\
\|u\|^2_{H^2_{k}} := \|u\|^2 _{H^1_{k}(0, 1)}
+ \|(k(x)u_x)_x\|^2_{ L^2(0,1)}, \quad u \in H^{2}_{k}(0, 1).
\end{array}
\right.
\end{equation*}
We recall from
\cite[Theorem 2.2]{genni1} that the  operator $(P,D(P))$ defined  by
$
Pu := (k(x)u_x)_x,\, \ u \in
D(P) = H^2_{k}(0, 1),
$
is closed
self-adjoint and  negative  with dense domain in $L^2(0, 1)$. Consequently, from \cite[Theorem 5]{web} the operator $\mathcal{A}:=-{{\partial } \over {\partial a}}+P$ generates a $C_0$-semigroup on the space  $L^{2}((0, A)\times(0, 1))$. Then, the following well-posedness result holds.
\begin{theorem}\label{410}
Under the assumptions \eqref{hyp-k} and \eqref{hyp-beta-mu} and for all $\vartheta\ \in L^{2}(Q)$ and $y_{0} \in L^{2}(Q_{A})$, the system \eqref{405} admits a unique solution $y$. This solution belongs to $E:=C([0, T], L^{2}((0, A)\times(0, 1)))\cap C([0, A], L^{2}((0, T)\times(0, 1))) \cap L^{2}((0, T)\times(0, A), H^{1}_{k}(0, 1))$.
Moreover, the solution of \eqref{405} satisfies the following inequality
\begin{eqnarray*}\label{well-posedness}
\nonumber & &
\sup_{t\in [0,T]}\|y(t)\|^{2}_{L^{2}(Q_{A})}+ \sup_{a\in [0,A]}\|y(a)\|^{2}_{L^{2}(Q_{T})}+\int_{0}^1\int_{0}^{A}\int_{0}^{T}
(\sqrt{k(x)}y_x)^2dtdadx\\ & &\leq C\left(\int_{q}\vartheta^{2}+
\int_{Q_{A}}y_{0}^{2}dadx\right).
\end{eqnarray*}
\end{theorem}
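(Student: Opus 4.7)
The plan is to combine the $C_{0}$-semigroup generated by $\mathcal{A}$ with a fixed-point argument to handle the non-local renewal condition at $a=0$, and then derive the announced a priori bounds via the standard multiplier method adapted to the weighted setting.

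First, I would decouple the non-local condition. For a fixed birth profile $b\in L^{2}(Q_{T})$, consider the auxiliary linear problem obtained by replacing $y(t,0,x)=\int_{0}^{A}\beta(t,a,x)y(t,a,x)\,da$ with $y(t,0,x)=b(t,x)$. Introducing a smooth lift $\tilde b(t,a,x)=\chi(a)b(t,x)$, where $\chi$ is a cutoff satisfying $\chi(0)=1$ and supported in $[0,A)$, the difference $w=y-\tilde b$ solves an inhomogeneous Cauchy problem on $X:=L^{2}(Q_{A})$ of the form
\[
\dot w = \mathcal{A}w - \mu w + \vartheta\chi_{\omega} + g(t), \qquad w(0) = y_{0}-\tilde b(0,\cdot,\cdot),
\]
with source $g\in L^{2}(0,T;X)$ determined by $b$. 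Since $\mathcal{A}$ generates a $C_{0}$-semigroup on $X$ and multiplication by $\mu\in L^{\infty}$ is a bounded perturbation, this problem admits a unique mild solution $y_{b}$; the improved regularity $y_{b}\in E$ then follows from standard trace theory for the weighted space $H^{1}_{k}(0,1)$.

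Next, the renewal condition is recovered via a fixed point. Define
\[
\Phi:L^{2}(Q_{T})\to L^{2}(Q_{T}),\qquad \Phi(b)(t,x):=\int_{0}^{A}\beta(t,a,x)\,y_{b}(t,a,x)\,da,
\]
and seek $b^{*}$ with $\Phi(b^{*})=b^{*}$. Either one equips $L^{2}(Q_{T})$ with the weighted norm $\|b\|_{\lambda}^{2}:=\int_{0}^{T}e^{-\lambda t}\|b(t)\|_{L^{2}(0,1)}^{2}\,dt$ and proves that $\Phi$ is a strict contraction for $\lambda$ large, or one iterates Picard on slabs of small length in $t$; in both routes, the transport structure of $\partial_{a}$ together with the assumption $\beta(\cdot,0,\cdot)\equiv 0$ from \eqref{hyp-beta-mu} is what keeps the Lipschitz constant under control, and uniqueness of $y$ is inherited from that of the fixed point.

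Finally, the a priori bound is obtained by multiplying \eqref{405} by $y$ and integrating, first over $(0,1)\times(0,A)$ and then over $(0,1)\times(0,T)$. Integration by parts in $x$ is legitimate on $D(P)=H^{2}_{k}(0,1)$ and extended by density; the Dirichlet conditions kill the boundary terms in $x$ and leave the nonnegative quantity $\int_{0}^{1}k(x)y_{x}^{2}\,dx$. The $y_{a}y$ and $y_{t}y$ contributions produce the transport boundary terms in $a$ and $t$, and the nonlocal trace $\|y(t,0,\cdot)\|_{L^{2}(0,1)}^{2}$ is controlled via Cauchy--Schwarz and the $L^{\infty}$ bound on $\beta$ by $A\|\beta\|_{\infty}^{2}\|y(t)\|_{L^{2}(Q_{A})}^{2}$. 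A Gronwall argument in $t$ then delivers simultaneously the bounds on $\sup_{t}\|y(t)\|_{L^{2}(Q_{A})}^{2}$ and on the diffusion integral, and the analogous identity in $a$ yields $\sup_{a}\|y(a)\|_{L^{2}(Q_{T})}^{2}$. I expect the main obstacle to be not the energy computations themselves but the justification of the integration by parts in $x$ across the interior degeneracy: one must verify that no hidden boundary contribution appears at $x_{0}$, which is precisely what the weakly degenerate regime $\gamma\in[0,1)$ in \eqref{hyp-k} is designed to ensure, together with the density of $H^{2}_{k}$ in $H^{1}_{k}$.
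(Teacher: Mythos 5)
Your overall plan (decouple the renewal condition, solve an intermediate problem with a prescribed birth datum $b\in L^{2}(Q_{T})$, recover the nonlocal condition by a fixed point, then get the estimate by the multiplier method and Gronwall) is reasonable, but it is not what the paper does: the paper recalls from Fragnelli--Goldstein--Goldstein--Romanelli that $Pu=(k u_{x})_{x}$ with domain $H^{2}_{k}(0,1)$ is self-adjoint and negative on $L^{2}(0,1)$ and then simply invokes Webb's generation theorem for $\mathcal{A}=-\partial_{a}+P$ on $L^{2}((0,A)\times(0,1))$, the $L^{\infty}$ rates $\mu,\beta$ being absorbed as bounded perturbations; your route is closer to the intermediate-system technique of Echarroudi--Maniar cited in the introduction, with a Banach rather than Leray--Schauder fixed point, which is legitimate here since the problem is linear and the map $b\mapsto\Phi(b)$ is affine.

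There is, however, a step that fails as written: the lifting. For $b\in L^{2}(Q_{T})$ the function $\tilde b(t,a,x)=\chi(a)b(t,x)$ does not reduce the boundary-inhomogeneous problem to a Cauchy problem with source $g\in L^{2}(0,T;X)$, because $g$ must contain $\chi(a)\,\partial_{t}b$ and $\chi(a)\,(k b_{x})_{x}$, which are not defined (let alone square integrable) for a generic $b$ that is merely $L^{2}$ in $t$ and $x$; the lift would require $b\in H^{1}(0,T;L^{2}(0,1))\cap L^{2}(0,T;H^{2}_{k}(0,1))$, and the fixed-point map $\Phi(b)=\int_{0}^{A}\beta\,y_{b}\,da$ does not restore this regularity since $\beta$ is only $L^{\infty}$. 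The standard repair is to define $y_{b}$ directly as a mild solution by integrating along the characteristics $t-a=\mathrm{const}$, i.e. $y_{b}(t,a,\cdot)=S(a)b(t-a,\cdot)+\cdots$ for $a<t$ and $y_{b}(t,a,\cdot)=S(t)y_{0}(a-t,\cdot)+\cdots$ for $a>t$, with $S$ the semigroup generated by $(k\cdot_{x})_{x}$ (treating $\mu$ by Duhamel); this is exactly the structure of the implicit formula the paper writes for the adjoint system, and it is this causal (Volterra-in-$t$) structure, not the hypothesis $\beta(\cdot,0,\cdot)\equiv0$, that makes your exponentially weighted norm give a contraction --- that hypothesis plays its role later, in the observability argument, and is irrelevant to well-posedness. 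Alternatively, keep your lift but run it for regular $b$ and pass to the limit using the very energy estimate of your last step. With this correction the remainder of your argument (energy identity, bounding $\|y(t,0,\cdot)\|_{L^{2}(0,1)}^{2}$ by $A\|\beta\|_{\infty}^{2}\|y(t)\|_{L^{2}(Q_{A})}^{2}$, Gronwall in $t$ and then in $a$) does produce the stated inequality, and your worry about the interior degeneracy is resolved as you suspect: for $y(t,a,\cdot)\in H^{2}_{k}(0,1)$ the flux $k y_{x}$ belongs to $H^{1}(0,1)$, hence is continuous across $x_{0}$, so no boundary contribution appears there.
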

\subsection{Carleman estimates results}
As we said in the introduction, we will show the main key of this paper namely the Carleman type inequality. In general, it is well-known that to prove a controllability result of a studied model  through this a priori estimate, we must show this last for the associated adjoint system. In our case, this adjoint system takes the following form
\begin{align}
&{{\partial w} \over {\partial t}} + {{\partial w} \over {\partial a}}+(k(x)w_{x})_{x}-\mu(t,a, x)w =-\beta(t,a,x)w(t,0,x),\nonumber\\
& w(t,a, 1)=w(t,a, 0)=0, \label{adj-sys}\\
&\nonumber  w(T,a, x)=w_{T}(a, x),\\
&\nonumber  w(t,A, x)=0,
\end{align}
where  $T>0$ and  assume that $w_{T} \in L^{2}(Q_{A})$. Of course, the assumptions \eqref{hyp-k} and \eqref{hyp-beta-mu} on $k$, $\mu$ and $\beta$ are perpetuated. To attaint our goal, we will introduce the following weight functions
\begin{equation}\label{weight-functions}
\left\{
\begin{array}{l}
\varphi(t,a, x):=\Theta  (t, a)\psi(x),
\\
\Theta  (t, a):= \displaystyle\frac{1}{(t(T-t))^{4}a^{4}},\\\\
\psi(x):= c_{1}\left(\int_{x_{0}}^{x}\frac{r-x_{0}}{k(r)} dr-c_{2}\right).
\end{array}
\right.
\end{equation}
For the moment, we will assume that $c_{2}>\max\{\frac{(1-x_{0})^{2}}{k(1)(2-\gamma)}, \frac{x_{0}^{2}}{k(0)(2-\gamma)} \}$ and $c_{1}>0$. A more precise restriction on $c_{1}$ will be given later. On the other hand, using the relation satisfied by $c_{2}$ and with the aid of \cite[Lemma 2.1]{genni} one can prove that $\psi(x)<0 \hspace{0.25cm} \forall x\in [0,1]$. Observe also that $\Theta(a,t)\rightarrow +\infty$ as $t\rightarrow T^{-}, 0^{+}$ and $a\rightarrow 0^{+}.$\\ To demonstrate our Carleman estimate, we require that $k$ fulfills, besides \eqref{hyp-k} the following hypothesis
\begin{equation}\label{H-P}
\left\{
\begin{array}{l}
\exists \theta\in (0, \gamma] \text{ such that :}\\
x\mapsto\frac{k(x)}{|x-x_{0}|^{\theta}} \text{ is nonincreasing on the left of } x=x_{0}\\
\text{ and nondecreasing on the right of } x=x_{0},
\end{array}
\right.
\end{equation}
where $\gamma$ is defined by \eqref{hyp-k}. The first Carleman estimate result is the following
\begin{proposition}\label{Intermediate-Carleman estimate}
Consider the two following systems with $h \in L^{2}(Q)$
 \begin{eqnarray}\label{inter-adjoint sys-1}
{{\partial w} \over {\partial t}} + {{\partial w} \over {\partial a}}+(k(x)w_{x})_{x} &=& h,\\
\nonumber  w(a, t, 1)=w(a, t, 0)&=&0, \\
\nonumber  w(T, a, x)&=&w_{T}(a, x),\\
\nonumber  w(t, A, x)&=&0,
\end{eqnarray}
and
\begin{eqnarray}\label{inter-adjoint sys-2}
 {{\partial w} \over {\partial t}} + {{\partial w} \over {\partial a}}+(k(x)w_{x})_{x}-\mu(t, a, x)w &=& h,\\
\nonumber  w(t, a, 1)=w(t, a, 0)&=&0, \\
\nonumber  w(T, a, x)&=&w_{T}(a, x),\\
\nonumber  w(t, A, x)&=&0.
\end{eqnarray}
 Then, there exist two positive constants
$C$ and $s_{0}$, such that every solutions of \eqref{inter-adjoint sys-1} and  \eqref{inter-adjoint sys-2} satisfy, for all $s\geq s_{0}$, the following inequality
\begin{eqnarray}\label{Inter-Carl}
& &s^{3}\int_{Q}  \Theta^{3}\frac{(x-x_{0})^{2}}{k(x)}w^{2}e^{2s\varphi} dtdadx + s\int_{Q}  \Theta k(x)w_{x}^{2}e^{2s\varphi} dtdadx \\ \nonumber& &\leq C\left(\int_{Q}\mid h\mid^{2}e^{2s\varphi} dtdadx+s \int_{0}^{A}\int_{0}^{T}[k\Theta e^{2s\varphi} (x-x_{0})w_{x}^{2}]_{x=0}^{x=1} dtda\right).
\end{eqnarray}
\end{proposition}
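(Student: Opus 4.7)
The plan is to establish the Carleman inequality first for \eqref{inter-adjoint sys-1} and then to deduce it for \eqref{inter-adjoint sys-2} by absorbing the lower-order term $\mu w$ into the source. For \eqref{inter-adjoint sys-1} I would follow the Fursikov--Imanuvilov strategy adapted to interior degeneracy as in \cite{genni}, starting from the conjugation $v := e^{s\varphi}w$. Because $\psi<0$ on $[0,1]$ and $\Theta\to+\infty$ as $t\to 0^{+},T^{-}$ and $a\to 0^{+}$, the function $v$ vanishes on $\{t=0\}\cup\{t=T\}\cup\{a=0\}$; combined with $w(t,A,x)=0$ this gives $v(t,A,x)=0$, and the Dirichlet data for $w$ persist in $v$. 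A direct computation gives a splitting $L_s^+ v + L_s^- v = e^{s\varphi}h$ into symmetric and antisymmetric parts of the conjugated operator, after which the identity
\[
\|L_s^+ v\|_{L^2(Q)}^2 + \|L_s^- v\|_{L^2(Q)}^2 + 2\langle L_s^+ v, L_s^- v\rangle_{L^2(Q)} = \|e^{s\varphi}h\|_{L^2(Q)}^2
\]
reduces matters to extracting positive distributed terms from the cross product.

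The core of the proof is the integration by parts in the nine terms of $\langle L_s^+ v, L_s^- v\rangle$. After the cancellations, and using that $v$ vanishes at $t=0,T$ and $a=0,A$, I expect the two dominant distributed contributions
\[
s^{3}\int_{Q} \Theta^{3} k \psi_x^{2}\, v^{2}\, dtdadx \quad\text{and}\quad s\int_{Q} \Theta k\, v_x^{2}\, dtdadx,
\]
which, since $k\psi_x^2 = c_1^{2}(x-x_0)^{2}/k$ by \eqref{weight-functions}, are exactly the quantities on the left of \eqref{Inter-Carl} modulo the reversion from $v$ to $w$. The principal obstacle is the interior degeneracy at $x_0$: the remainder terms produced by the integration by parts involve $(k\psi_x)_x$, its $x$-derivative, and products such as $k'\psi_x \Theta$, each of which is singular at $x=x_0$. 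These must be dominated by the two principal terms. Here the estimate $|(x-x_0)k'(x)|\leq \gamma k(x)$ from \eqref{hyp-k} together with the $\theta$-monotonicity of $k(x)/|x-x_0|^{\theta}$ from \eqref{H-P} is invoked: the sign information on the two monotone halves of $[0,1]\setminus\{x_0\}$ is used to collect these singular remainders into the shape $\Theta^{3}(x-x_0)^{2}/k$ with a definite sign, so that they are absorbed by the principal terms, provided $c_1$ in \eqref{weight-functions} is taken large enough and then $s\geq s_0$. The only surviving boundary contributions are those at $x=0,1$, which precisely match the right-hand side of \eqref{Inter-Carl}.

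To return from $v$ to $w$, I use $v_x = e^{s\varphi}(w_x + s\Theta\psi_x w)$, which gives $v^{2} = w^{2}e^{2s\varphi}$ and $v_x^{2}$ equivalent modulo a term of order $s^2\Theta^2 k \psi_x^2 w^2 e^{2s\varphi}$, a quantity absorbed by the $s^{3}$-term for $s$ large. Finally, for \eqref{inter-adjoint sys-2} I rewrite the equation as $w_t + w_a + (k w_x)_x = h + \mu w$, apply the estimate just proved, and absorb the new term $\int_Q \mu^{2} w^{2} e^{2s\varphi}\, dtdadx$ into the left-hand side by splitting the $x$-integration: on $\{|x-x_0|\geq \delta\}$ the weight $\Theta^{3}(x-x_0)^{2}/k$ is bounded below by a positive constant, while on $\{|x-x_0|<\delta\}$ a Hardy-type inequality combined with the gradient term $s\int_Q \Theta k w_x^{2}e^{2s\varphi}$ from \eqref{Inter-Carl} controls $\int w^{2}e^{2s\varphi}$. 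Enlarging $s_0$ and $C$ accordingly yields \eqref{Inter-Carl} for \eqref{inter-adjoint sys-2}.
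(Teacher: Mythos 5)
Your proposal follows essentially the same route as the paper: conjugation $\nu=e^{s\varphi}w$, the splitting $L_s^{+}\nu+L_s^{-}\nu=e^{s\varphi}h$ with the norm identity, integration by parts in the cross term leaving only the $x=0,1$ boundary contributions, extraction of the two principal terms $s^{3}\int\Theta^{3}\frac{(x-x_0)^2}{k}\nu^{2}$ and $s\int\Theta k\nu_x^{2}$ with absorption of the remainders via \eqref{hyp-k} and a Hardy--Poincar\'e argument under \eqref{H-P}, reversion to $w$, and finally treating $\mu w$ as part of the source. The only cosmetic deviation is that the paper absorbs the $\Theta_{aa},\Theta_{tt},\Theta_{ta}$ remainders through a generalized Young inequality plus Hardy--Poincar\'e with the auxiliary weight $p(x)=(k(x)|x-x_0|^{4})^{1/3}$, taking $\epsilon$ small and $s$ large rather than enlarging $c_1$, which plays no essential role here.
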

\proof
Firstly, we will prove \eqref{Inter-Carl} for system \eqref{inter-adjoint sys-1} and replacing $h$ by $h+\mu w$ we will get the same inequality for \eqref{inter-adjoint sys-2}. So, let $w$ be the solution of \eqref{inter-adjoint sys-1} and put $$\nu(t, a, x):=e^{s\varphi(t, a, x)}w(t, a, x)$$. Then, $\nu$ satisfies the following system
\begin{eqnarray}\label{Carl-1}
L^{+}_{s}\nu +  L^{-}_{s}\nu &=& e^{s\varphi(t, a, x)}h, \\
\nonumber  \nu(t, a, 1)=\nu(t, a, 0)&=&0, \\
\nonumber  \nu(T, a, x)=\nu(0, a, x)&=&0,\\
\nonumber  \nu(t, A, x)=\nu(t, 0, x)&=&0,
\end{eqnarray}
 where $$L^{+}_{s}\nu:=(k(x)\nu_{x})_{x}-s(\varphi_{a}+\varphi_{t})\nu+s^{2}\varphi^{2}_{x}k(x)\nu,$$
and $$L^{-}_{s}\nu:=\nu_{t}+\nu_{a}-2sk(x)\varphi_{x}\nu_{x}-s(k(x)\varphi_{x})_{x}\nu.$$
Passing to the norm in \eqref{Carl-1}, one has
$$\|L^{+}_{s}\nu\|^{2}_{L^{2}(Q)} +  \|L^{-}_{s}\nu\|^{2}_{L^{2}(Q)} + 2<L^{+}_{s}\nu,L^{-}_{s}\nu>= \|e^{s\varphi(a, t, x)}h\|^{2}_{L^{2}(Q)},$$
where $<.,.>$ denotes here the inner product in $L^{2}(Q)$.
Then, the proof of step one is based on the calculus of the inner product $<L^{+}_{s}\nu,L^{-}_{s}\nu>$ whose a first expression is given in the following lemma.
\begin{lemma}\label{Inter-Carl-1}
The following identity holds
$$<L^{+}_{s}\nu,L^{-}_{s}\nu>=S_{1}+S_{2},$$ with
\begin{align}\label{20}
\nonumber S_{1}&=s\int_{Q}(k(x)\nu_{x})^{2}\varphi_{xx}dtdadx-s^{3}\int_{Q}(k(x)\varphi_{x})_{x}k(x)\varphi^{2}_{x}\nu^{2}dtdadx+s^{2}\int_{Q}(\varphi_{a}+\varphi_{t})(k(x)\varphi_{x})_{x}\nu^{2}dtdadx \\ \nonumber &
+s\int_{Q}k(x)\nu_{x}((k(x)\varphi_{x})_{xx}\nu+(k(x)\varphi_{x})_{x}\nu_{x})dtdadx+s^{3}\int_{Q}(k^{2}\varphi^{3}_{x})_{x}\nu^{2}dtdadx \\ &\nonumber-s^{2}\int_{Q}(k(x)(\varphi_{a}+\varphi_{t})\varphi_{x})_{x}\nu^{2}dtdadx+\frac{s}{2}\int_{Q}(\varphi_{at}+\varphi_{tt})\nu^{2}dtdadx-\frac{s^{2}}{2}\int_{Q}(\varphi^{2}_{x})_{t}k(x)\nu^{2}dtdadx\\
&\nonumber+ \frac{s}{2}\int_{Q}(\varphi_{at}+\varphi_{aa})\nu^{2}dtdadx-\frac{s^{2}}{2}\int_{Q}(\varphi^{2}_{x})_{a}k(x)\nu^{2}dtdadx,
\end{align}
and
\begin{eqnarray}\label{21}
\nonumber
S_{2}&=&\int_{0}^{A}\int_{0}^{T}[k(x)\nu_{x}\nu_{a}]_{0}^{1}dtda+\int_{0}^{A}\int_{0}^{T}[k(x)\nu_{x}\nu_{t}]_{0}^{1}dtda \\
&&\nonumber+ s^{2}\int_{0}^{A}\int_{0}^{T}[k(x)\varphi_{x}(\varphi_{a}+\varphi_{t})\nu^{2}]_{0}^{1}dtda-s^{3}\int_{0}^{A}\int_{0}^{T}[k^{2}(x)\varphi^{3}_{x}\nu^{2}]_{0}^{1}dtda \\ &&\nonumber -s\int_{0}^{A}\int_{0}^{T}[k(x)\nu\nu_{x}(k(x)\varphi_{x})_{x}]_{0}^{1}dtda-s\int_{0}^{A}\int_{0}^{T}[(k(x)\nu_{x})^{2}\varphi_{x}]_{0}^{1}dtda.
\end{eqnarray}
\end{lemma}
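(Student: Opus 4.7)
The plan is to expand the inner product $\langle L^+_s \nu, L^-_s \nu \rangle_{L^2(Q)}$ as a double sum of twelve scalar products, one for each pairing of a term from $L^+_s \nu = (k\nu_x)_x - s(\varphi_a+\varphi_t)\nu + s^2 k \varphi_x^2 \nu$ with a term from $L^-_s \nu = \nu_t + \nu_a - 2sk\varphi_x \nu_x - s(k\varphi_x)_x \nu$. Each of the twelve integrals is then reshuffled by integration by parts until every derivative has been moved off the unknown $\nu$ in a way that either produces an $x$-boundary term (contributing to $S_2$), a pure volume term in $\nu^2$ or $(k\nu_x)^2$ (contributing to $S_1$), or a boundary term in $t$ or $a$ that vanishes because of the homogeneous conditions $\nu(T,a,x)=\nu(0,a,x)=\nu(t,A,x)=\nu(t,0,x)=0$ in \eqref{Carl-1} (these hold since the weight $e^{s\varphi}$ forces $\nu$ to vanish at those endpoints even though $w$ itself is only prescribed at $t=T$ and $a=A$).

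First, I would handle the three terms coming from $(k\nu_x)_x$. Against $\nu_t$ and $\nu_a$ one integrates by parts in $x$ once, producing the lateral boundary terms $[k\nu_x\nu_t]_0^1$ and $[k\nu_x\nu_a]_0^1$ of $S_2$ and interior pieces of the form $-\tfrac12 \partial_t (k\nu_x^2)$ and $-\tfrac12\partial_a(k\nu_x^2)$, each of which integrates to zero in $t$ respectively $a$. Against $-2sk\varphi_x \nu_x$, setting $u=k\nu_x$ the integral becomes $-s\int \varphi_x \partial_x(u^2)$, which after one more integration by parts in $x$ yields the boundary term $-s[(k\nu_x)^2 \varphi_x]_0^1$ of $S_2$ and the volume term $s\int (k\nu_x)^2 \varphi_{xx}$ of $S_1$. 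Against $-s(k\varphi_x)_x \nu$, one integration by parts in $x$ creates the boundary term $-s[k\nu\nu_x(k\varphi_x)_x]_0^1$ and the volume terms $s\int k\nu_x(k\varphi_x)_{xx}\nu$ and $s\int k\nu_x^2 (k\varphi_x)_x$ that appear in $S_1$.

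Next, the three terms coming from $-s(\varphi_a+\varphi_t)\nu$ are treated by writing $\nu\nu_t = \tfrac12 (\nu^2)_t$, $\nu\nu_a = \tfrac12 (\nu^2)_a$, and $\nu \nu_x = \tfrac12 (\nu^2)_x$, and then integrating by parts in the corresponding variable. The $t$- and $a$-boundaries vanish, giving the clean contributions $\tfrac{s}{2}\int(\varphi_{at}+\varphi_{tt})\nu^2$ and $\tfrac{s}{2}\int(\varphi_{at}+\varphi_{aa})\nu^2$ appearing in $S_1$, while the $x$-integration by parts against $-2sk\varphi_x \nu_x$ yields the boundary term $s^2[k\varphi_x(\varphi_a+\varphi_t)\nu^2]_0^1$ of $S_2$ and the volume term $-s^2\int (k(\varphi_a+\varphi_t)\varphi_x)_x \nu^2$ of $S_1$. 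The undifferentiated pairing with $-s(k\varphi_x)_x\nu$ immediately gives $s^2\int(\varphi_a+\varphi_t)(k\varphi_x)_x\nu^2$. Finally, the three terms from $s^2 k\varphi_x^2 \nu$ are processed identically: pairing with $\nu_t$ and $\nu_a$ produces $-\tfrac{s^2}{2}\int k(\varphi_x^2)_t\nu^2$ and $-\tfrac{s^2}{2}\int k(\varphi_x^2)_a\nu^2$ after the $t$- and $a$-boundaries are discarded; pairing with $-2sk\varphi_x\nu_x$ yields, after integration by parts in $x$, the boundary term $-s^3[k^2\varphi_x^3\nu^2]_0^1$ and the volume term $s^3\int(k^2\varphi_x^3)_x \nu^2$; pairing with $-s(k\varphi_x)_x\nu$ gives the last volume term $-s^3\int (k\varphi_x)_x k\varphi_x^2 \nu^2$ of $S_1$.

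Collecting the twelve pieces and sorting volume contributions into $S_1$ and lateral boundary contributions into $S_2$ produces exactly the identities \eqref{20} and \eqref{21}. The one delicate point, which I expect to be the main technical obstacle, is justifying the $x$-integration by parts across the degeneracy point $x_0\in(0,1)$: the individual factors $\varphi_x$ and $\varphi_{xx}$ are singular at $x_0$, but the products $k\varphi_x = c_1\Theta(x-x_0)$ and $(k\varphi_x)_x = c_1\Theta$ are bounded and smooth, so every integrand that actually appears is integrable, and a standard approximation argument (splitting $(0,1)$ into $(0,x_0-\eta)\cup(x_0+\eta,1)$, integrating by parts on each piece, and letting $\eta\to 0^+$ using the continuity of $k\nu_x$ guaranteed by $\nu\in H^2_k$) removes the apparent singularity without producing any extra interior boundary contribution at $x_0$.
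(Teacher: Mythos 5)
Your computation is correct and is exactly the argument the paper relies on: it proves the identity by expanding $\langle L^{+}_{s}\nu,L^{-}_{s}\nu\rangle$ into the twelve pairings and integrating by parts, sorting interior terms into $S_1$ and lateral boundary terms into $S_2$ while the $t$- and $a$-boundary terms vanish, which is precisely the computation the paper delegates to \cite[Lemma 3.2]{man} instead of reproducing, and all twelve contributions match the terms of \eqref{20} and \eqref{21}. Your additional justification of the $x$-integrations by parts across the degeneracy point (splitting at $x_{0}\pm\eta$ and using the continuity of $k\nu_{x}$ coming from the $H^{2}_{k}$ regularity, together with the boundedness of $k\varphi_{x}=c_{1}\Theta(x-x_{0})$ and $(k\varphi_{x})_{x}=c_{1}\Theta$) is a sound treatment of a point the paper leaves implicit.
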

For the proof of Lemma \ref{Inter-Carl-1}, see the one of \cite[Lemma 3.2]{man}. The previous expressions of $S_{1}$ and $S_{2}$ can be simplified using of the functions $\varphi$ and $\psi$ given in \eqref{weight-functions} and also the homogeneous Dirichlet boundary conditions satisfied by $\nu$. Hence, one has
\begin{eqnarray}\label{S1}
& &
S_{1}=\frac{s}{2}\int_{Q}(\Theta_{aa}+\Theta_{tt})\psi\nu^{2}dxdtda+s\int_{Q}\Theta_{ta}\psi\nu^{2}dtdadx \nonumber\\& &+sc_{1}\int_{Q}\Theta
(2k(x)-(x-x_{0})k'(x))\nu_{x}^{2}dtdadx-2s^{2}\int_{Q}\Theta c_{1}^{2}\frac{(x-x_{0})^{2}}{k(x)}(\Theta_{a}+\Theta_{t})\nu^{2}dtdadx\nonumber\\& &+
s^{3}\int_{Q}\Theta^{3}c_{1}^{3}\left(\frac{x-x_{0}}{k(x)})^{2}(2k(x)-(x-x_{0})k'(x)\right)\nu^{2}dtdadx,
\end{eqnarray}
and
$$S_{2}=-sc_{1}\int_{0}^{A}\int_{0}^{T}[k\Theta e^{2s\varphi} (x-x_{0})\nu_{x}^{2}]_{x=0}^{x=1} dtda.$$
Accordingly,
\begin{eqnarray}\label{scalar-product}
\nonumber
<L^{+}_{s}\nu,L^{-}_{s}\nu>&=&\frac{s}{2}\int_{Q}(\Theta_{aa}+\Theta_{tt})\psi\nu^{2}dxdtda+s\int_{Q}\Theta_{ta}\psi\nu^{2}dtdadx \nonumber\\& &+sc_{1}\int_{Q}\Theta
(2k(x)-(x-x_{0})k'(x))\nu_{x}^{2}dtdadx-2s^{2}\int_{Q}\Theta c_{1}^{2}\frac{(x-x_{0})^{2}}{k(x)}(\Theta_{a}+\Theta_{t})\nu^{2}dtdadx\nonumber\\& &+
s^{3}\int_{Q}\Theta^{3}c_{1}^{3}\left(\frac{x-x_{0}}{k(x)}\right)^{2}(2k(x)-(x-x_{0})k'(x))\nu^{2}dtdadx
\\ \nonumber& &-sc_{1}\int_{0}^{A}\int_{0}^{T}[k\Theta e^{2s\varphi} (x-x_{0})w_{x}^{2}]_{x=0}^{x=1} dtda.
\end{eqnarray}
Thanks to the third assumption in \eqref{hyp-k}, we have
\begin{eqnarray}\label{S1-1}
S_{1}\geq\frac{s}{2}\int_{Q}(\Theta_{aa}+\Theta_{tt})\psi\nu^{2}dtdadx+
s\int_{Q}\Theta_{ta}\psi\nu^{2}dtdadx +sc_{1}\int_{Q}\Theta
k(x)\nu_{x}^{2}dtdadx\\-2s^{2}\int_{Q}\Theta c_{1}^{2}\frac{(x-x_{0})^{2}}{k(x)}(\Theta_{a}+\Theta_{t})\nu^{2}dtdadx  \nonumber+
s^{3}\int_{Q}\Theta^{3}c_{1}^{3}\frac{(x-x_{0})^{2}}{k(x)}\nu^{2}dtdadx.
\end{eqnarray}
Observe that $|\Theta(\Theta_{a}+\Theta_{t})|\leq c\Theta^{3},$ to infer, for $s$ quite large that
\begin{eqnarray}\label{S1-2}
& &\nonumber
\left|-2s^{2}\int_{Q}\Theta c_{1}^{2}\frac{(x-x_{0})^{2}}{k(x)}(\Theta_{a}+\Theta_{t})\nu^{2}dtdadx \right|\\ & & \leq 2s^{2}c_{1}^{2}c\int_{Q}\frac{(x-x_{0})^{2}}{k(x)}\Theta^{3}\nu^{2}dtdadx\leq \frac{c_{1}^{3}}{4}s^{3}\int_{Q}\frac{(x-x_{0})^{2}}{k(x)}\Theta^{3}\nu^{2}dtdadx.
\end{eqnarray}
On the other hand, we have $r\mapsto\frac{|r-x_{0}|^{\gamma}}{k(r)}$ is nondecreasing in the right of $x_{0}$.
\\ Then,
\begin{eqnarray}\label{S1-3}
\nonumber & & |\psi(x)|=|c_{1}l(x)-c_{1}c_{2}|\leq c_{1}\left|\int_{x_{0}}^{x}\frac{r-x_{0}}{k(r)}dr\right|+c_{1}c_{2}\leq c_{1}c_{2}+c_{1}\frac{(1-x_{0})^{2}}{k(1)(2-\gamma)}\\ \nonumber &&\leq \frac{c_{1}}{(2-\gamma)k(1)}+c_{1}c_{2}.
\end{eqnarray}
A simple computations allow us to check that $\Theta_{aa}+\Theta_{tt}+|\Theta_{ta}|\leq C_{1} \Theta^{\frac{3}{2}}$. This yields
\begin{eqnarray}\label{S1-4}
\nonumber& &
\left|\frac{s}{2}\int_{Q}(\Theta_{aa}+\Theta_{tt})\psi\nu^{2}dtdadx+s\int_{Q}\Theta_{ta}\psi\nu^{2}dtdadx\right| \\ \nonumber& &\leq s\left(\frac{c_{1}}{(2-\gamma)k(1)}+c_{1}c_{2}\right)\int_{Q}\left(\frac{\Theta_{aa}+\Theta_{tt}}{2}+|\Theta_{ta}|\right)\nu^{2}dtdadx\\ & &\leq M s\left(\frac{c_{1}}{(2-\gamma)k(1)}+c_{1}c_{2}\right)\int_{Q}\Theta^{\frac{3}{2}}\nu^{2}dtdadx.
\end{eqnarray}
It remains now to bound the term $|\int_{Q}\Theta^{\frac{3}{2}}\nu^{2}dtdadx|$. Using the generalized Young inequality we obtain
\begin{eqnarray}\label{S1-5}
\nonumber &&\left|\int_{0}^{1}\Theta^{\frac{3}{2}}\nu^{2}dx\right|\\ \nonumber && = \left|\int_{0}^{1}\left(\Theta\frac{k^{\frac{1}{3}}}{|x-x_{0}|^{\frac{2}{3}}}\nu^{2}\right)^{\frac{3}{4}}
\left(\Theta^{3}\frac{|x-x_{0}|^{2}}{k}\nu^{2}\right)^{\frac{1}{4}}dx\right|\\ &&
\leq\frac{3\epsilon}{4}\int_{0}^{1}\Theta\frac{k^{\frac{1}{3}}}{|x-x_{0}|^{\frac{2}{3}}}\nu^{2}dx
+\frac{1}{4\epsilon}\int_{0}^{1}\Theta^{3}\frac{|x-x_{0}|^{2}}{k}\nu^{2}dx
\end{eqnarray}
Put
\begin{eqnarray}\label{p}
p(x)=(k(x)|x-x_{0}|^{4})^{\frac{1}{3}}.
\end{eqnarray}
By hypothesis \eqref{H-P}, one can check that $x\mapsto\frac{p(x)}{|x-x_{0}|^{q}}$, with $q:=\frac{4+\theta}{3}\in (1,2)$ is nonincreasing on the left of $x=x_{0}$ and nondecreasing on the right of $x=x_{0}$. Furthermore, we have $\frac{k^{\frac{1}{3}}}{|x-x_{0}|^{\frac{2}{3}}}=\frac{p(x)}{(x-x_{0})^{2}}$ and there exists $C_{2}>0$ such that $p(x)<C_{2}k(x)$.
\\ Hence, by mean of Hardy-Poincar\'{e} inequality (see \cite[Proposition 2.3]{genni}), we conclude that
\begin{eqnarray}\label{S1-6}
\nonumber \int_{0}^{1}\Theta\frac{k^{\frac{1}{3}}}{|x-x_{0}|^{\frac{2}{3}}}\nu^{2}dx
&=&\int_{0}^{1}\Theta\frac{p(x)}{(x-x_{0})^{2}}\nu^{2}dx\\ \nonumber &&\leq C\int_{0}^{1}\Theta p \nu_{x}^{2}dx
\\&& \leq C C_{2}\int_{0}^{1}\Theta k \nu_{x}^{2}dx,
\end{eqnarray}
where $C>0$ is the constant of Hardy-Poincar\'{e}. Combining \eqref{S1-5} and \eqref{S1-6}, we get
\begin{eqnarray}\label{S1-7}
\left|\int_{0}^{1}\Theta^{\frac{3}{2}}\nu^{2}dx\right|\leq \frac{3\epsilon}{4}C_{3}\int_{Q}\Theta k \nu_{x}^{2}dtdadx
+\frac{1}{4\epsilon}\int_{Q}\Theta^{3}\frac{|x-x_{0}|^{2}}{k}\nu^{2}dtdadx.
\end{eqnarray}
Hence, \eqref{S1-4} and \eqref{S1-7} lead to
\begin{eqnarray}\label{S1-8}
\nonumber&&
\left|\frac{s}{2}\int_{Q}(\Theta_{aa}+\Theta_{tt})\psi\nu^{2}dtdadx+s\int_{Q}\Theta_{ta}\psi\nu^{2}dtdadx\right| \\ & &\leq sc_{1}C_{4}\epsilon\int_{Q}\Theta k \nu_{x}^{2}dtdadx
+\frac{sc_{1}C_{5}}{4\epsilon}\int_{Q}\Theta^{3}\frac{|x-x_{0}|^{2}}{k}\nu^{2}dtdadx.
\end{eqnarray}
Taking $\epsilon$ small enough and $s$ quite large, we conclude that
\begin{eqnarray}\label{S1-9}
\nonumber&&
\left|\frac{s}{2}\int_{Q}(\Theta_{aa}+\Theta_{tt})\psi\nu^{2}dtdadx+s\int_{Q}\Theta_{ta}\psi\nu^{2}dtdadx\right| \\ &&
\leq\frac{sc_{1}}{4}\int_{Q}\Theta k \nu_{x}^{2}dtdadx+\frac{s^{3}c_{1}^{3}}{4}\int_{Q}\Theta^{3}\frac{|x-x_{0}|^{2}}{k}\nu^{2}dtdadx.
\end{eqnarray}
Taking into account the relations \eqref{S1-1} and \eqref{S1-2} we arrive to
\begin{eqnarray}\label{S1-10}
S_{1}\geq K_{1}s^{3}\int_{Q}\Theta^{3}\frac{|x-x_{0}|^{2}}{k}\nu^{2}dtdadx+K_{2}s\int_{Q}\Theta k \nu_{x}^{2}dtdadx
\end{eqnarray}
Hence,
\begin{eqnarray}\label{S1-11}
\nonumber &&
2<L^{+}_{s}\nu,L^{-}_{s}\nu>\geq m\left(s^{3}\int_{Q}\Theta^{3}\frac{|x-x_{0}|^{2}}{k}\nu^{2}dtdadx+s\int_{Q}\Theta k \nu_{x}^{2}dtdadx\right)\\ &&-2sc_{1}\int_{0}^{A}\int_{0}^{T}[k\Theta e^{2s\varphi} (x-x_{0})\nu_{x}^{2}]_{x=0}^{x=1} dtda
\end{eqnarray}
This steers to the following Carleman estimate verified by $\nu$ solution of \eqref{Carl-1}
\begin{eqnarray}\label{S1-12}
\nonumber &&s^{3}\int_{Q}\Theta^{3}\frac{|x-x_{0}|^{2}}{k}\nu^{2}dtdadx+s\int_{Q}\Theta k \nu_{x}^{2}dtdadx \\ &&\leq C_{6}\left(\int_{Q}h^{2}e^{2s\varphi}dtdadx+s\int_{0}^{A}\int_{0}^{T}[k\Theta e^{2s\varphi} (x-x_{0})\nu_{x}^{2}]_{x=0}^{x=1} dtda\right)
\end{eqnarray}
By the definition of $\nu$ we infer that
\begin{eqnarray}\label{S1-13}
\nu_{x}=s\varphi_{x}e^{s\varphi}w+e^{s\varphi}w_{x}, \hspace{0.25cm}e^{2s\varphi}w_{x}^{2}\leq2(\nu_{x}^{2}+s^{2}\varphi_{x}^{2}\nu^{2}).
\end{eqnarray}
Finally, the Carleman estimate \eqref{Inter-Carl} of \eqref{inter-adjoint sys-1} is obtained.
\\Now, If we apply the same inequality of Hardy-Poincar\'{e} in a similar way as before to the function $\nu:=e^{s\varphi}w$, taking into account the hypothesis on $\mu$ assumed in \eqref{hyp-beta-mu}, using the Carleman type inequality \eqref{Inter-Carl} for the function $h+\mu w$ and taking $s$ quite large we achieve the Proposition \ref{Intermediate-Carleman estimate}.
\endproof
 With the aid of the estimate \eqref{Inter-Carl} and Caccioppoli's inequality  \eqref{Caccio},  we can now show a $\omega$-local Carleman estimate for the system \eqref{inter-adjoint sys-2}. This result will be useful to show our main Carleman estimate replacing the second term $h$ by $-\beta (t,a,x)w(t,0,x)$. To this end, we introduce the following weight functions:
 \begin{equation}\label{poids}
\left\{
\begin{array}{l}
\Phi(t,a, x):=\Theta  (t, a)\Psi(x),
\\
\Psi(x)=e^{\kappa\sigma(x)}-e^{2\kappa\|\sigma\|_{\infty}},
\end{array}
\right.
\end{equation}
where $\Theta$ is given by \eqref{weight-functions}, $\kappa>0$ and $\sigma$ is the  function given by
\begin{equation}\label{sigma-2}
\left\{
\begin{array}{l}
\sigma \in C^{2}([0,1]),
\sigma(x)>0 \text{ in } (0,1), \sigma(0)=\sigma(1)=0, \\
\sigma_{x}(x)\neq0 \text{ in } [0,1]\backslash \omega_{0},
\end{array}
\right.
\end{equation}
where $ \omega_{0}\Subset\omega $ is an open subset.
The existence of the function $\sigma$ is proved in \cite{Fursikov}.
\\ On the other hand by the definition of $\varphi$ \eqref{weight-functions} and taking
\begin{eqnarray}\label{c-1-restr-0}
c_{1}\geq \max\left(\frac{k(1)(2-\gamma)(e^{2\kappa\|\sigma\|_{\infty}}-1)}{c_{2}k(1)(2-\gamma)-(1-x_{0})^{2}}
,\frac{k(0)(2-\gamma)(e^{2\kappa\|\sigma\|_{\infty}}-1)}{c_{2}k(0)(2-\gamma)-x_{0}^{2}}\right),
\end{eqnarray}
one can prove that
\begin{equation}\label{c-1-restr}
\varphi\leq \Phi.
\end{equation}
Our theorem is stated as follows
\begin{theorem}\label{Carle-estimate}
Assume that the assumptions
\eqref{hyp-k}, \eqref{hyp-beta-mu} and \eqref{H-P} hold. Let $A>0$ and $T>0$ be given. Then, there exist positive constants $C$ and $s_{0}$ such that for all $s\geq s_{0}$,  every solution $w$ of \eqref{inter-adjoint sys-2} satisfies
\begin{align}\label{Carle-esti}
&&\int_{Q}(s\Theta k w_{x}^{2}&+s^{3}\Theta^{3}\frac{(x-x_{0})^{2}}{k} w^{2})e^{2s\varphi} dtdadx
\leq C\left(\int_{Q}h^{2}e^{2s\Phi} dtdadx+ \int_{q}s^{3}\Theta^{3}w^{2}e^{2s\Phi}dtdadx\right),
\end{align}
\end{theorem}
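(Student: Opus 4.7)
The plan is to combine the degenerate Carleman estimate of Proposition~\ref{Intermediate-Carleman estimate} with a classical (non-degenerate) Fursikov--Imanuvilov type Carleman estimate, via a smooth partition of unity around the interior degeneracy point $x_{0}$, and then absorb the surviving local gradient term through the Caccioppoli inequality \eqref{Caccio}.

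More precisely, I would fix nested open intervals $\omega_{00}\Subset\omega_{0}\Subset\omega$ with $x_{0}\in\omega_{00}$, and choose a cut-off $\eta\in C^{\infty}([0,1],[0,1])$ with $\eta\equiv 1$ on $\omega_{00}$ and $\eta\equiv 0$ outside $\omega_{0}$. Setting $W:=\eta w$ and $Z:=(1-\eta)w$, the function $W$ is compactly supported in $\omega_{0}\subset(0,1)$, so that the boundary integral in \eqref{Inter-Carl} vanishes when applied to $W$, while $Z$ vanishes on $\omega_{00}$, hence $k\geq k_{\min}>0$ on $\mathrm{supp}(Z)$. A direct computation shows that $W$ and $Z$ satisfy the same parabolic problem \eqref{inter-adjoint sys-2} up to commutator source terms of the form $2k\eta_{x}w_{x}+(k\eta_{x})_{x}w$, all supported in $\omega_{0}\setminus\omega_{00}\subset\omega$.

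Applying Proposition~\ref{Intermediate-Carleman estimate} to $W$ yields a bound of the type
\[
s^{3}\!\int_{Q}\!\Theta^{3}\tfrac{(x-x_{0})^{2}}{k}W^{2}e^{2s\varphi}+s\!\int_{Q}\!\Theta k W_{x}^{2}e^{2s\varphi}\leq C\!\int_{Q}\!h^{2}e^{2s\varphi}+C\!\int_{q}(w^{2}+k w_{x}^{2})e^{2s\varphi},
\]
the boundary term in \eqref{Inter-Carl} being zero. Separately, since the equation for $Z$ is uniformly parabolic on $\mathrm{supp}(Z)$, a classical Carleman estimate with the non-degenerate weight $\Phi$ from \eqref{poids} (as in \cite{Fursikov}) gives
\[
s^{3}\!\int_{Q}\!\Theta^{3}Z^{2}e^{2s\Phi}+s\!\int_{Q}\!\Theta Z_{x}^{2}e^{2s\Phi}\leq C\!\int_{Q}\!h^{2}e^{2s\Phi}+Cs^{3}\!\int_{q}\!\Theta^{3}w^{2}e^{2s\Phi}.
\]
I would then add these two inequalities, using $\varphi\leq\Phi$ from \eqref{c-1-restr} (which gives $e^{2s\varphi}\leq e^{2s\Phi}$) to harmonize the weights on the right-hand side, and the boundedness of $k$ on $\mathrm{supp}(Z)$ to rewrite the left-hand side of the classical estimate in the form appearing in \eqref{Carle-esti}. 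Reconstructing $w$ from $W+Z$ introduces only additional contributions supported in $\omega_{0}\setminus\omega_{00}\subset\omega$.

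The last step consists in eliminating the local gradient term $\int_{q} k w_{x}^{2}e^{2s\Phi}$, which is bounded via the Caccioppoli inequality \eqref{Caccio} of the appendix by an $\omega$-integral of $w^{2}e^{2s\Phi}$ over a slightly larger subdomain (still contained in $\omega$) plus a multiple of $\int_{Q}h^{2}e^{2s\Phi}$ that is already present on the right-hand side; absorbing these yields \eqref{Carle-esti}. The main technical obstacle is the bookkeeping of the nested $\omega$-subdomains $\omega_{00}\Subset\omega_{0}\Subset\omega_{0}'\Subset\omega$: one must pick them so that (i) the commutator terms coming from $\eta$ are supported in $\omega$, and (ii) the enlarged set produced by Caccioppoli still sits inside $\omega$, so that the local term on the right of \eqref{Carle-esti} is indeed an integral over $q=(0,T)\times(0,A)\times\omega$ and not over a strictly larger observation region.
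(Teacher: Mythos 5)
Your proposal is correct and follows essentially the same route as the paper: cut-off functions isolating the interior degeneracy point, Proposition \ref{Intermediate-Carleman estimate} applied near $x_{0}$ (where compact support kills the boundary term), the non-degenerate Carleman estimate of Proposition \ref{Carl-nondegenerate} with the weight $\Phi$ away from $x_{0}$, the comparison $\varphi\leq\Phi$ from \eqref{c-1-restr} to harmonize the weights, and Caccioppoli's inequality \eqref{Caccio} to eliminate the commutator gradient terms supported in $\omega$. The only cosmetic difference is that the paper uses three cut-offs and sums the estimates over $(0,\lambda_{1})$, $(\lambda_{1},\lambda_{2})$ and $(\lambda_{2},1)$ where the cut-offs equal one, whereas you use the partition $w=W+Z$; note that your $Z$ must anyway be handled separately on each side of $x_{0}$, since Proposition \ref{Carl-nondegenerate} is stated for intervals lying entirely on one side of $x_{0}$.
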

To prove this theorem, we need the following result which represents the Carleman estimate of nondegenerate population dynamics systems. This inequality is stated as follows
\begin{proposition}\label{Carl-nondegenerate}
Let us consider the following system
\begin{eqnarray}\label{564}
{{\partial z} \over {\partial t}} + {{\partial z} \over {\partial a}}+(k(x)z_{x})_{x}-c(t, a, x)z = h && \text{ in } Q_{b},\\
\nonumber  z(t, a, b_{1})=z(t, a, b_{2})=0 && \text{ on } (0,T)\times(0,A),
\end{eqnarray}
where $Q_{b}:=(0,T)\times(0,A)\times(b_{1},b_{2})$, $(b_{1},b_{2})\subset[0,x_{0})$, or $(b_{1},b_{2})\subset(x_{0},1],$  $h \in L^{2}(Q_{b})$, $k \in C^{1}([0,1])$ is a strictly positive function and $c\in L^{\infty}(Q_{b})$.Then, there exist two positive constants $C$ and $s_{0}$, such that for any $s\geq s_{0}$, $z$ verifies the following
estimate
\begin{eqnarray}\label{570}
\nonumber & &
\int_{Q_{b}}(s^{3}\phi^{3}z^{2}+s\phi z_{x}^{2})e^{2s\Phi}dtdadx \\ & &
\leq C \left(\int_{Q_{b}}h^{2}e^{2s\Phi}dtdadx+\int_{\omega}\int_{0}^{A}\int_{0}^{T}s^{3}\phi^{3}z^{2}e^{2s\Phi}dtdadx\right),
\end{eqnarray}
where
\begin{eqnarray}\label{phi}
\phi(t,a,x)=\Theta(t,a)e^{\kappa\sigma(x)},
\end{eqnarray}
$\Theta$ and $\Phi$ are defined by \eqref{poids} and $\sigma$ by \eqref{sigma-2}.
\end{proposition}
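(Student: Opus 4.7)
The plan is to follow the classical Fursikov--Imanuvilov strategy adapted to the age--structured operator, in the spirit of the computations already carried out for Proposition \ref{Intermediate-Carleman estimate}. Since the problem is nondegenerate on $[b_{1},b_{2}]$ and $c\in L^{\infty}(Q_{b})$, I would absorb the zeroth--order term $cz$ into the right--hand side at the end of the argument and work with the pure parabolic--transport operator. First I would conjugate by the weight, setting $v(t,a,x):=e^{s\Phi(t,a,x)}z(t,a,x)$. Then $v$ vanishes on the lateral boundary $x=b_{1},b_{2}$ and, thanks to the blow--up of $\Theta$ at $t=0,T$ and $a=0$, also at those endpoints in $t$ and $a$ (the endpoint $a=A$ is handled by the condition $z(t,A,x)=0$ inherited from the full adjoint system). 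Moreover, $v$ satisfies $L_{s}^{+}v+L_{s}^{-}v=e^{s\Phi}(h+cz)$ with
$$L_{s}^{+}v:=(k v_{x})_{x}+s^{2}k\Phi_{x}^{2}v-s(\Phi_{t}+\Phi_{a})v, \qquad L_{s}^{-}v:=v_{t}+v_{a}-2sk\Phi_{x}v_{x}-s(k\Phi_{x})_{x}v.$$

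Next I would expand
$\|L_{s}^{+}v\|^{2}+\|L_{s}^{-}v\|^{2}+2\langle L_{s}^{+}v,L_{s}^{-}v\rangle=\|e^{s\Phi}(h+cz)\|_{L^{2}(Q_{b})}^{2}$
by integration by parts in $x$, $t$ and $a$. Using the vanishing of $v$ at the parabolic and temporal boundaries together with the explicit expression $\Phi_{x}=\kappa\Theta\sigma_{x}e^{\kappa\sigma}$, the dominant interior contributions produced are
$$2s^{3}\kappa^{4}\int_{Q_{b}}\phi^{3}\sigma_{x}^{4}k^{2}v^{2}\,dtdadx+2s\kappa^{2}\int_{Q_{b}}\phi\, k\sigma_{x}^{2}v_{x}^{2}\,dtdadx,$$
plus remainders of lower order in $s$ or involving $\Theta_{t}$, $\Theta_{a}$, $\Theta_{tt}$, $\Theta_{aa}$, which obey pointwise bounds of the form $|\Theta^{(j)}|\leq C\Theta^{1+j/4}$ and are therefore absorbed into the two principal terms for $s$ large. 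Because $\sigma_{x}\neq 0$ on $[0,1]\setminus\omega_{0}$, these two dominant quantities already control $s^{3}\int\phi^{3}v^{2}+s\int\phi v_{x}^{2}$ on $Q_{b}\setminus((0,T)\times(0,A)\times\omega_{0})$.

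To extend the estimate to all of $Q_{b}$ I would localize: introduce a cut--off $\xi\in C_{c}^{\infty}(\omega)$ with $\xi\equiv 1$ on $\omega_{0}$ and $0\leq\xi\leq 1$, multiply the equation by $s^{3}\phi^{3}\xi\, e^{2s\Phi}z$ and integrate over $Q_{b}$; using the equation to trade one derivative on $z$, one recovers the missing term on $\omega_{0}$ at the cost of the observation term $\int_{q}s^{3}\phi^{3}z^{2}e^{2s\Phi}$ that appears on the right--hand side of \eqref{570}. Finally, the change of variable is reversed by means of the pointwise relation $e^{2s\Phi}z_{x}^{2}\leq 2(v_{x}^{2}+s^{2}\Phi_{x}^{2}v^{2})$, and the term $\int c^{2}e^{2s\Phi}z^{2}$ coming from the zeroth--order part is absorbed by enlarging $s_{0}$. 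The main technical obstacle is the bookkeeping of the scalar product in the presence of the transport terms $v_{t}+v_{a}$: one has to verify that every boundary contribution at $t\in\{0,T\}$ and $a\in\{0,A\}$ either vanishes thanks to the weight or is killed by the boundary condition $z(t,A,x)=0$, and that all cross--terms of the form $sk\Phi_{x}(\Phi_{t}+\Phi_{a})v^{2}$ or $s\Phi_{xx}v\,v_{t}$ can be dominated by the two principal terms above using the bounds on $\Theta$ and its derivatives; once this bookkeeping is settled, the localization step on $\omega_{0}$ is entirely standard.
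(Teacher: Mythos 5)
Your outline is essentially correct, but it is a genuinely different route from the paper's: the paper does not prove Proposition \ref{Carl-nondegenerate} by a direct computation at all, it invokes \cite[Lemma 2.1]{Ain3} (a Carleman estimate for the nondegenerate age--space model with \emph{constant} diffusion, no source term, and the weight $\Theta(t,a)=\tfrac{1}{t(T-t)a}$) and asserts that ``by careful computations'' the same proof adapts to a general strictly positive $k\in C^{1}$, the weight $\Theta(t,a)=\tfrac{1}{t^{4}(T-t)^{4}a^{4}}$ and the source $h$. What you propose --- conjugation $v=e^{s\Phi}z$, the splitting into $L_{s}^{+}v+L_{s}^{-}v$, expansion of the scalar product, use of $\sigma_{x}\neq0$ outside $\omega_{0}$, and a cut-off step to push the missing local terms into the observation on $\omega$ --- is exactly the Fursikov--Imanuvilov machinery \cite{Fursikov} on which the cited lemma rests, so your version buys a self-contained argument at the price of the bookkeeping you acknowledge, while the paper buys brevity by delegating that bookkeeping to the reader. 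Three points in your bookkeeping need care. First, the condition $z(t,A,x)=0$ is not part of the statement \eqref{564} but is genuinely needed: the transport term produces the boundary contribution $-\tfrac12\int k\,v_{x}^{2}(t,A,x)\,dt\,dx$, whose sign is unfavorable, and only the condition at $a=A$ (which you rightly import from the application $z=\eta w$, where it holds) makes it vanish; the weight only kills $t=0$, $t=T$ and $a=0$. Second, the remainders carrying $\sigma_{xx}$ and $k'$ have the \emph{same} powers of $s$ as the principal terms, so they are not absorbed ``for $s$ large''; outside $\omega_{0}$ they are dominated by the $\sigma_{x}^{4}$ and $\sigma_{x}^{2}$ terms only after fixing $\kappa$ large enough, and inside $\omega_{0}$ they must be sent to the observation. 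Third, your localization multiplier $s^{3}\phi^{3}\xi e^{2s\Phi}z$ overshoots: its cross terms are of size $s^{5}\phi^{5}z^{2}e^{2s\Phi}$ on $\omega$, which is not covered by the right-hand side of \eqref{570}; the correct choice is $s\phi\,\xi e^{2s\Phi}z$ (or a Caccioppoli inequality in the spirit of Lemma \ref{Caccio-1}), which recovers $s\int_{\omega_{0}}\phi\,k z_{x}^{2}e^{2s\Phi}$ at the cost of exactly $s^{3}\phi^{3}z^{2}e^{2s\Phi}$ over $\omega$. With these adjustments your plan yields a complete direct proof of \eqref{570}.
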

Before giving the proof of Theorem \ref{Carle-estimate}, we note that a similar result was demonstrated in \cite[Lemma 2.1]{Ain3} in the case when $k$ is a positive constant, for any dimension $n$ without the source term $h$ and with the weight function $\Theta(t,a)=\frac{1}{t(T-t)a}$. By careful computations, the same proof can be adapted to \eqref{570} where $k$ is a positive general nondegenerate coefficient, with our weight function $\Theta(t,a)=\frac{1}{t^{4}(T-t)^{4}a^{4}}$ and the source term $h$.
\proof
Let us introduce the smooth cut-off function $\xi:\mathbb{R}\rightarrow \mathbb{R}$ defined by
\begin{equation}\label{cut-off func-1}
\begin{cases}
0\leq\xi(x)\leq1,&x \in [0,1],\\
\xi(x)=1, &   x \in [\lambda_{1}, \lambda_{2}],\\
\xi(x)=0, &x \in [0,1]\backslash \omega,
\end{cases}
\end{equation}
where $\lambda_{1}=\frac{x_{1}+2x_{0}}{3}$ and $\lambda_{2}=\frac{x_{0}+2x_{2}}{3}$.\\ Let $w$ be the solution of \eqref{inter-adjoint sys-2} and define $v:=\xi w$. Then, $v$ satisfies the following system
\begin{eqnarray}\label{inter-adjoint sys-3}
 {{\partial v} \over {\partial t}} + {{\partial v} \over {\partial a}}+(k(x)v_{x})_{x}-\mu(t, a, x)v &=& \overline{h},\\
\nonumber  v(t, a, 1)=v(t, a, 0)&=&0, \\
\nonumber  v(T, a, x)&=&\xi w_{T}(a, x),\\
\nonumber  v(t, A, x)&=&0,
\end{eqnarray}
where $\overline{h}:=\xi h+(k(x)\xi_{x}w)_{x}+k(x)w_{x}\xi_{x}$. \\ Using Carleman estimate \eqref{Inter-Carl} and the definition of $\xi$, one has
\begin{align}\label{Carle-esti-1}
&&\int_{Q}(s\Theta k v_{x}^{2}&+s^{3}\Theta^{3}\frac{(x-x_{0})^{2}}{k} v^{2})e^{2s\varphi} dtdadx
\leq C\int_{Q}\overline{h}^{2}e^{2s\varphi} dtdadx.
\end{align}
On the other hand, using again the definition of $\xi$ we can check readily that
\begin{eqnarray}\label{equ-1}
\nonumber &&\int_{\lambda_{1}}^{\lambda_{2}}\int_{0}^{A}\int_{0}^{T}(s\Theta k v_{x}^{2}+s^{3}\Theta^{3}\frac{(x-x_{0})^{2}}{k} v^{2})e^{2s\varphi} dtdadx\\&=&
\int_{\lambda_{1}}^{\lambda_{2}}\int_{0}^{A}\int_{0}^{T}(s\Theta k w_{x}^{2}+s^{3}\Theta^{3}\frac{(x-x_{0})^{2}}{k} w^{2})e^{2s\varphi} dtdadx.
\end{eqnarray}
Therefore, combining \eqref{Carle-esti-1} and \eqref{equ-1} we have
\begin{eqnarray}\label{ineq-1}
&& \nonumber\int_{\lambda_{1}}^{\lambda_{2}}\int_{0}^{A}\int_{0}^{T}(s\Theta k w_{x}^{2}+s^{3}\Theta^{3}\frac{(x-x_{0})^{2}}{k} w^{2})e^{2s\varphi} dtdadx\\ && \leq C\int_{Q}\overline{h}^{2}e^{2s\varphi} dtdadx.
\end{eqnarray}
Hence by means of Caccioppoli's inequality \eqref{Caccio} and \eqref{ineq-1}, we conclude that
\begin{eqnarray}\label{ineq-2}
&& \nonumber\int_{\lambda_{1}}^{\lambda_{2}}\int_{0}^{A}\int_{0}^{T}(s\Theta k w_{x}^{2}+s^{3}\Theta^{3}\frac{(x-x_{0})^{2}}{k} w^{2})e^{2s\varphi} dtdadx\\ && \leq C\left(\int_{Q}h^{2}e^{2s\varphi}dtdadx+\int_{q} s^{2}\Theta^{2}w^{2}e^{2s\varphi} dtdadx\right),
\end{eqnarray}
where $\omega^{'}$ of Lemma \ref{Caccio-1} here is exactly $(x_{1}, \lambda_{1})\cup(\lambda_{2}, x_{2})$. \\
Now, let $z:=\eta w$, with $\eta$ is the smooth cut-off function defined by
\begin{equation}\label{cut-off func-2}
\begin{cases}
0\leq\eta(x)\leq1,&x \in [0,1],\\
\eta(x)=0, &   x \in [0, \frac{\lambda_{3}+2\lambda_{2}}{3}],\\
\eta(x)=1, &x \in [\lambda_{2}, 1],
\end{cases}
\end{equation}
where $\lambda_{3}=\frac{x_{2}+2x_{0}}{3}$. We can observe easily that $\lambda_{3}<\frac{\lambda_{3}+2\lambda_{2}}{3}<\lambda_{2}$. Then, $z$ satisfies the following population dynamics equation
\begin{eqnarray}\label{inter-adjoint sys-4}
 {{\partial z} \over {\partial t}} + {{\partial z} \over {\partial a}}+k(x)z_{xx}+k^{'}(x)z_{x}-\mu(t, a, x)z &=& \widetilde{h}, \hspace{0.5cm} \text{ in } (\lambda_{3}, 1)\\
 \nonumber z(t, a, 1)=z(t, a, \lambda_{3})&=&0,  \hspace{0.5cm} \text{ in }, (0,T)\times(0,A)
\end{eqnarray}
where $\widetilde{h}:=\eta h+(k(x)\eta_{x}w)_{x}+k(x)w_{x}\eta_{x}$.\\ By assumption on $k$, we have $k(x)>0, \hspace{0.25cm} x\in(\lambda_{3}, 1)$. Hence, \eqref{inter-adjoint sys-4} is a nondegenerate model. In this case, applying Proposition \ref{Carl-nondegenerate} to the function $\widetilde{h}$ with $b_{1}=\lambda_{3}$ and $b_{2}=1$ and using again Caccioppoli's inequality \eqref{Caccio}, we infer that
\begin{eqnarray}\label{431}
\nonumber& &\int_{\lambda_{3}}^{1}\int_{0}^{A}\int_{0}^{T}(s^{3}\phi^{3}z^{2}+s\phi z_{x}^{2})e^{2s\Phi}dtdadx\\ \nonumber& & \leq C\left(\int_{\lambda_{3}}^{1}\int_{0}^{A}\int_{0}^{T}(\eta h+(k\eta_{x}w)_{x}+k\eta_{x}w_{x})^{2}e^{2s\Phi} dtdadx+ \int_{\omega}\int_{0}^{A}\int_{0}^{T} s^{3}\Theta^{3}w^{2}e^{2s\Phi} dtdadx\right)
\\ \nonumber&& \leq \widetilde{C}\left(\int_{Q}h^{2}e^{2s\Phi}+((k\eta_{x}w)_{x}+k\eta_{x}w_{x})^{2}e^{2s\Phi} dtdadx+ \int_{\omega}\int_{0}^{A}\int_{0}^{T} s^{3}\Theta^{3}w^{2}e^{2s\Phi} dtdadx\right)
\\ \nonumber&& \leq \widetilde{C}(\int_{Q}h^{2}e^{2s\Phi}dtdadx
+\int_{\omega^{'}}\int_{0}^{A}\int_{0}^{T}(8(k\eta_{x})^{2}w_{x}^{2}+2((k\eta_{x})_{x})^{2}w^{2})e^{2s\Phi} dtdadx\\ \nonumber&&+ \int_{\omega}\int_{0}^{A}\int_{0}^{T}s^{3}\Theta^{3} w^{2}e^{2s\Phi} dtdadx)\\ \nonumber & & \leq \widetilde{C}_{1}\left(\int_{Q}h^{2}e^{2s\Phi}dtdadx
+\int_{\omega^{'}}\int_{0}^{A}\int_{0}^{T}(w_{x}^{2}+w^{2})e^{2s\Phi} dtdadx+ \int_{\omega}\int_{0}^{A}\int_{0}^{T}s^{3}\Theta^{3} w^{2}e^{2s\Phi} dtdadx\right)\\ & & \leq \widetilde{C}_{2}\left(\int_{Q}h^{2}e^{2s\Phi}dtdadx+ \int_{\omega}\int_{0}^{A}\int_{0}^{T} s^{3}\Theta^{3}w^{2}e^{2s\Phi} dtdadx\right),
\end{eqnarray}
with $\omega^{'}:=(\frac{\lambda_{3}+2\lambda_{2}}{3}, \lambda_{2})$.
By the restriction \eqref{c-1-restr} there exists $c_{3}>0$ such that,  for $ (t,a,x)\in [0,T]\times[0,A]\times[\lambda_{3},1]$, we have
\begin{equation*}
\Theta k(x)e^{2s\varphi}\leq c_{3}\phi e^{2s\Phi},\quad
\Theta^{3}\frac{(x-x_{0})^{2}}{k(x)}e^{2s\varphi}\leq c_{3}\phi^{3}e^{2s\Phi}.
\end{equation*}
Then,
\begin{eqnarray}\label{ineq-z-1}
&& \nonumber\int_{\lambda_{3}}^{1}\int_{0}^{A}\int_{0}^{T}(s\Theta k z_{x}^{2}+s^{3}\Theta^{3}\frac{(x-x_{0})^{2}}{k} z^{2})e^{2s\varphi} dtdadx\\ && \leq c_{3}\int_{\lambda_{3}}^{1}\int_{0}^{A}\int_{0}^{T}(s^{3}\phi^{3}z^{2}+s\phi z_{x}^{2})e^{2s\Phi}dtdadx.
\end{eqnarray}
This inequality together with \eqref{431} lead to
\begin{eqnarray}\label{ineq-z-2}
&& \nonumber\int_{\lambda_{3}}^{1}\int_{0}^{A}\int_{0}^{T}(s\Theta k z_{x}^{2}+s^{3}\Theta^{3}\frac{(x-x_{0})^{2}}{k} z^{2})e^{2s\varphi} dtdadx\\ && \leq \widetilde{c_{3}}\left(\int_{Q}h^{2}e^{2s\Phi}dtdadx+ \int_{\omega}\int_{0}^{A}\int_{0}^{T} s^{3}\Theta^{3}w^{2}e^{2s\Phi} dtdadx\right).
\end{eqnarray}
Taking into account the definition of $\eta$ \eqref{cut-off func-2}, we can say that
\begin{eqnarray}\label{z-3}
\nonumber &&\int_{\lambda_{2}}^{1}\int_{0}^{A}\int_{0}^{T}(s\Theta k z_{x}^{2}+s^{3}\Theta^{3}\frac{(x-x_{0})^{2}}{k} z^{2})e^{2s\varphi} dtdadx\\&=&
\int_{\lambda_{2}}^{1}\int_{0}^{A}\int_{0}^{T}(s\Theta k w_{x}^{2}+s^{3}\Theta^{3}\frac{(x-x_{0})^{2}}{k} w^{2})e^{2s\varphi} dtdadx.
\end{eqnarray}
Hence,
\begin{eqnarray}\label{w-3}
\nonumber &&\int_{\lambda_{2}}^{1}\int_{0}^{A}\int_{0}^{T}(s\Theta k w_{x}^{2}+s^{3}\Theta^{3}\frac{(x-x_{0})^{2}}{k} w^{2})e^{2s\varphi} dtdadx\\&& \leq \widetilde{c_{3}}\left(\int_{Q}h^{2}e^{2s\Phi}dtdadx+ \int_{\omega}\int_{0}^{A}\int_{0}^{T} s^{3}\Theta^{3}w^{2}e^{2s\Phi} dtdadx\right),
\end{eqnarray}
as a consequence of \eqref{ineq-z-2} and \eqref{z-3}.
Arguing in the same way for $(0, \lambda_{1})$, one can show that
\begin{eqnarray}\label{w-4}
\nonumber &&\int_{0}^{\lambda_{1}}\int_{0}^{A}\int_{0}^{T}(s\Theta k w_{x}^{2}+s^{3}\Theta^{3}\frac{(x-x_{0})^{2}}{k} w^{2})e^{2s\varphi} dtdadx\\&& \leq \widetilde{c_{3}}\left(\int_{Q}h^{2}e^{2s\Phi}dtdadx+ \int_{\omega}\int_{0}^{A}\int_{0}^{T} s^{3}\Theta^{3}w^{2}e^{2s\Phi} dtdadx\right),
\end{eqnarray}
Finally, summing the inequalities \eqref{ineq-2}, \eqref{w-3} and \eqref{w-4} side by side, taking $s$ quite large and using again the restriction on $c_{1}$ \eqref{c-1-restr} we arrive to
 \begin{eqnarray}\label{w-5}
\nonumber &&\int_{0}^{1}\int_{0}^{A}\int_{0}^{T}(s\Theta k w_{x}^{2}+s^{3}\Theta^{3}\frac{(x-x_{0})^{2}}{k} w^{2})e^{2s\varphi} dtdadx\\&& \leq \widetilde{c_{3}}\left(\int_{Q}h^{2}e^{2s\Phi}dtdadx+ \int_{\omega}\int_{0}^{A}\int_{0}^{T} s^{3}\Theta^{3}w^{2}e^{2s\Phi} dtdadx\right),
\end{eqnarray}
and this is exactly the desired estimate \eqref{Carle-esti}.
\endproof
Before to provide the main Carleman estimate, we make the following remarks:
\begin{remark}
1/ The proof of our distributed-Carleman estimate \eqref{Carle-esti} is based on the cut-off functions and given by two different weighted functions $\varphi$ and $\Phi$, in addition by \eqref{c-1-restr} there is no positive constant $C$ such that:
$$e^{2s\Phi}\leq Ce^{2s\varphi}.$$
2/ Our proof is not based on the reflection method used for the proof of \cite[Lemma 4.1]{genni} which is needed to eliminate the boundary term arising in the classical Carleman estimate for nondegenerate heat equation.
\end{remark}
By the Carleman estimate \eqref{Carle-esti}, we are able to show the following $\omega$-Carleman estimate for the full adjoint system \eqref{adj-sys}
\begin{theorem}
Assume that the assumptions
\eqref{hyp-k}, \eqref{hyp-beta-mu} and \eqref{H-P} hold. Let $A>0$ and $T>0$ be given such that $0<T<\delta$, where $\delta\in(0,A)$ small enough. Then, there exist positive constants $C$ and $s_{0}$ such that for all $s\geq s_{0}$,  every solution $w$ of \eqref{inter-adjoint sys-2} satisfies
\begin{align}\label{main-Carle}
 && \int_{Q}(s\Theta k w_{x}^{2}+s^{3}\Theta^{3}\frac{(x-x_{0})^{2}}{k} w^{2})e^{2s\varphi} dtdadx\leq C\left( \int_{q}s^{3}\Theta^{3}w^{2}e^{2s\Phi}dtdadx+\int_{0}^{1}\int_{0}^{\delta}w_{T}^{2}(a,x)dadx\right),
\end{align}
for all $s\geq s_{0}$ and $\delta$ verifying \eqref{hyp-beta-mu}.
\end{theorem}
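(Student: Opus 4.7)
The plan is to apply the distributed Carleman estimate \eqref{Carle-esti} of Theorem \ref{Carle-estimate} to the full adjoint system \eqref{adj-sys}, reading its nonlocal right-hand side $-\beta(t,a,x)w(t,0,x)$ as the source $h$ in \eqref{inter-adjoint sys-2}. This yields immediately
\begin{equation*}
\int_Q\Big(s\Theta k w_x^2+s^3\Theta^3\tfrac{(x-x_0)^2}{k}w^2\Big)e^{2s\varphi}\,dt\,da\,dx
\le C\Big(\int_Q\beta^2 w(t,0,x)^2 e^{2s\Phi}\,dt\,da\,dx+\int_q s^3\Theta^3 w^2 e^{2s\Phi}\,dt\,da\,dx\Big).
\end{equation*}
The second term on the right already matches one of the two terms appearing in \eqref{main-Carle}. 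Moreover, since $\Psi<0$ gives $\Phi\le 0$ and $\beta\in L^\infty(Q)$, the pointwise estimate $\beta^2 e^{2s\Phi}\le\|\beta\|_\infty^2$ followed by an integration in $a\in(0,A)$ collapses the first right-hand-side term into $A\|\beta\|_\infty^2\int_0^T\|w(t,0,\cdot)\|_{L^2(0,1)}^2\,dt$. The entire proof thus reduces to controlling $\int_0^T\|w(t,0,\cdot)\|_{L^2(0,1)}^2\,dt$ by $\int_0^1\int_0^\delta w_T^2\,da\,dx$.

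To do this I will exploit the transport structure of \eqref{adj-sys} along the characteristics $t-a=\mathrm{const}$. Fixing $t\in(0,T)$, I set $v(\tau,x):=w(t+\tau,\tau,x)$ for $\tau\in[0,T-t]$. By the chain rule and \eqref{adj-sys}, the function $v$ solves
\begin{equation*}
v_\tau+(k(x)v_x)_x-\mu(t+\tau,\tau,x)\,v=-\beta(t+\tau,\tau,x)\,w(t+\tau,0,x),\qquad v(\tau,0)=v(\tau,1)=0,
\end{equation*}
with terminal data $v(T-t,x)=w_T(T-t,x)$; crucially $T-t\in(0,T)\subset(0,\delta)$ since $T<\delta<A$. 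Multiplying by $v$, integrating over $(0,1)$, integrating by parts in $x$, discarding the non-negative contributions from $\mu\ge 0$ and $k\ge 0$, and bounding the nonlocal term by Young's inequality gives
\begin{equation*}
\frac{d}{d\tau}\|v(\tau)\|_{L^2(0,1)}^2\ge -\|\beta\|_\infty\|v(\tau)\|_{L^2(0,1)}^2-\|\beta\|_\infty\|w(t+\tau,0,\cdot)\|_{L^2(0,1)}^2.
\end{equation*}
Multiplying by the integrating factor $e^{C\tau}$ with $C=\|\beta\|_\infty$ and integrating over $\tau\in(0,T-t)$ then produces the Volterra-type inequality
\begin{equation*}
\|w(t,0,\cdot)\|_{L^2(0,1)}^2\le e^{CT}\|w_T(T-t,\cdot)\|_{L^2(0,1)}^2+Ce^{CT}\int_t^T\|w(s,0,\cdot)\|_{L^2(0,1)}^2\,ds.
\end{equation*}

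Finally I integrate this inequality in $t\in(0,T)$ and exchange the order of integration in the double integral via $\int_0^T\int_t^T f(s)\,ds\,dt=\int_0^T s\,f(s)\,ds\le T\int_0^T f(s)\,ds$ with $f=\|w(\cdot,0,\cdot)\|_{L^2(0,1)}^2$. Choosing $\delta$ (and hence $T<\delta$) small enough so that $Ce^{CT}T\le 1/2$, the resulting term on the right is absorbed into the left-hand side, leaving
\begin{equation*}
\int_0^T\|w(t,0,\cdot)\|_{L^2(0,1)}^2\,dt\le 2e^{CT}\int_0^T\|w_T(r,\cdot)\|_{L^2(0,1)}^2\,dr\le 2e^{CT}\int_0^1\int_0^\delta w_T(a,x)^2\,da\,dx,
\end{equation*}
where the last step uses $T<\delta$. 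Plugging this back into the inequalities of the first paragraph produces \eqref{main-Carle}. The delicate point of the plan is precisely this Gronwall absorption, which is what dictates the smallness restriction ``$\delta$ small enough'' in the hypothesis; by contrast the invocation of Theorem \ref{Carle-estimate} and the characteristic change of variable are routine.
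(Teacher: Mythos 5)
Your argument is correct, and its first half (applying \eqref{Carle-esti} with $h=-\beta(t,a,x)w(t,0,x)$, using $\Phi\le 0$ and $\beta\in L^\infty$ to reduce everything to bounding $\int_0^T\|w(t,0,\cdot)\|^2_{L^2(0,1)}dt$ by $\int_0^1\int_0^\delta w_T^2\,da\,dx$) is exactly the paper's first step. Where you diverge is in how that trace term is controlled. The paper integrates the adjoint system along characteristics to get the implicit (mild) formula \eqref{453} and then invokes the hypothesis $\beta(\cdot,0,\cdot)\equiv 0$ from \eqref{hyp-beta-mu} to make the Volterra term at $a=0$ vanish identically, yielding the exact representation $w(t,0,\cdot)=S(T-t)w_T(T-t,\cdot)$ in \eqref{w}; the boundedness of the semigroup and $T<\delta$ then finish the proof with no smallness condition beyond $T<\delta$. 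You instead run an energy estimate along the same characteristics and keep the Volterra term, closing the argument by a Gronwall-type absorption. This buys you independence from the structural assumption $\beta(\cdot,0,\cdot)\equiv0$ (your proof would survive a fertile-newborn model), at the price of a quantitative smallness requirement $\|\beta\|_\infty e^{\|\beta\|_\infty T}\,T\le 1/2$, so your admissible $\delta$ depends on $\|\beta\|_\infty$ whereas the paper's does not; note that even this could be removed by solving your backward Volterra inequality with a genuine Gronwall lemma (integrating factor in $G(t)=\int_t^T\|w(s,0,\cdot)\|^2ds$) rather than crude absorption. Two minor points to make explicit: the absorption step needs the a priori finiteness of $\int_0^T\|w(t,0,\cdot)\|^2_{L^2(0,1)}dt$, which follows from the well-posedness/trace regularity of the adjoint problem (the paper uses this implicitly as well in \eqref{434}); and the integration by parts in your energy identity should be justified in the weighted space $H^1_k(0,1)$, which is standard and no less formal than the paper's use of the mild formula.
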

\proof
Applying  the inequality \eqref{Carle-esti} to  the function $h(t,a,x)=-\beta(t,a,x)w(t,0,x)$,  we have the existence of two positive constants $C$ and $s_{0}$ such that, for all $s\geq s_{0}$, the following inequality holds
\begin{eqnarray}
\nonumber
& &
s^{3}\int_{Q}  \Theta^{3}\frac{(x-x_{0})^{2}}{k(x)}w^{2}e^{2s\varphi} dtdadx + s\int_{Q}  \Theta k(x)w_{x}^{2}e^{2s\varphi} dtdadx \\ \nonumber& & \leq C
\left(\int_{Q}\beta^{2} w^{2}(t,0,x)e^{2s\Phi}dtdadx+\int_{q}s^{3}\Theta^{3}w^{2}e^{2s\Phi}dtdadx\right) \label{434}
\\  && \leq C\left(A\|\beta\|_{\infty}^{2}\int_{0}^{1}\int_{0}^{T} w^{2}(t,0,x)dtdx+\int_{q}s^{3}\Theta^{3}w^{2}e^{2s\Phi}dtdadx\right),
\end{eqnarray}
using \eqref{hyp-beta-mu}.
On the other hand, integrating over the characteristics lines and after a careful calculus we obtain the following implicit formula of $w$ solution of \eqref{adj-sys}
\begin{equation}\label{453}
\begin{cases}
w(t,a,\cdot)=\int_{0}^{A-a}S(A-a-l)\beta(t,A-l,\cdot)w(t,0,\cdot)dl, & \text{ if } a>t+(A-T)\\
w(t,a,\cdot)=S(T-t)w_{T}(T+(a-t),\cdot)+\int_{t}^{T}S(l-t)\beta(l,a,\cdot)w(l,0,\cdot)dl, & \text{ if } a\leq t+(A-T),
\end{cases}
\end{equation}
where $(S(t))_{t\geq 0}$ is the semi-group generated by the operator $A_{2}w=(kw_{x})_{x}-\mu w$.\\ Thus,
\begin{eqnarray}\label{w}
w(t,0,\cdot)=S(T-t)w_{T}(T-t,\cdot),
\end{eqnarray}
using the last hypothesis in \eqref{hyp-beta-mu} on $\beta$ . Injecting this formula in \eqref{434} and using the fact $(S(t))_{t\geq 0}$ is a bounded semi-group, we get
\begin{eqnarray}
\nonumber
& &
s^{3}\int_{Q}  \Theta^{3}\frac{(x-x_{0})^{2}}{k(x)}w^{2}e^{2s\varphi} dtdadx + s\int_{Q}  \Theta k(x)w_{x}^{2}e^{2s\varphi} dtdadx  \label{Carlesti}\\ \nonumber && \leq \widehat{C}\left(\int_{0}^{1}\int_{0}^{T} w_{T}^{2}(T-t,x)dtdx+\int_{q}s^{3}\Theta^{3}w^{2}e^{2s\Phi}dtdadx\right)\\ \nonumber && \leq \widehat{C}\left(\int_{0}^{1}\int_{0}^{T} w_{T}^{2}(m,x)dmdx+\int_{q}s^{3}\Theta^{3}w^{2}e^{2s\Phi}dtdadx\right)\\ && \leq \widehat{C}\left(\int_{0}^{1}\int_{0}^{\delta} w_{T}^{2}(m,x)dmdx+\int_{q}s^{3}\Theta^{3}w^{2}e^{2s\Phi}dtdadx\right),
\end{eqnarray}
since $T\in(0, \delta)$ with $\delta \in (0,A)$ small enough and this achieves the proof of \eqref{main-Carle}.
\endproof
\section{Observability inequality and null controllability results}\label{third-section}
\subsection{Observability inequality result}
The objective of this paragraph is to reach the observability inequality of the adjoint system \eqref{adj-sys}. To attain this purpose, we will combine the Carleman estimate \eqref{main-Carle} with the Hardy-Poincar\'{e} inequality stated in \cite[Proposition 2.3]{genni} and arguing in a similar way as in \cite{Ain3}. Our observability inequality is given by the following proposition
\begin{proposition}\label{prop-obser-ineq}
Assume that the assumptions \eqref{hyp-k}, \eqref{hyp-beta-mu} and \eqref{H-P} hold. Let $A>0$ and $T>0$ be given such that $0<T<\delta$, where $\delta\in(0,A)$ small enough. Then, there exists a positive constant $C_{\delta}$ such that  for every solution   $w$ of \eqref{adj-sys}, the  following observability inequality holds
 \begin{equation}\label{obser-ineq}
 \int_{0}^{1} \int_{0}^{A}w^{2}(0,a,x)dadx \leq C_{\delta} \left(\int_{q}w^{2} dtdadx+\int_{0}^{1}\int_{0}^{\delta}w_{T}^{2}(a,x)dadx\right).
\end{equation}
\end{proposition}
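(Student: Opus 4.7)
The plan is to follow the strategy announced just before the statement: use the Carleman estimate \eqref{main-Carle} to control $w$ on a sub-box where its weights are uniformly bounded, fill in the remaining ages near $a=0$ via the implicit formula \eqref{453}, and finally propagate the resulting interior bound down to $t=0$ by an energy estimate in the time direction.

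\emph{Carleman on a good sub-box plus Hardy--Poincar\'e.} I set $a_1:=\delta-3T/4$, which is strictly positive because $T<\delta$, and work on $R:=[T/4,3T/4]\times[a_1,A]\times(0,1)$. On $R$ the function $\Theta$ is uniformly bounded above and below, so $e^{2s\varphi}$ and $e^{2s\Phi}$ are comparable to $1$ on $R$ and on $q$. Dropping the exponential weights in \eqref{main-Carle} and applying the Hardy--Poincar\'e inequality of \cite[Proposition 2.3]{genni} (valid because $w(t,a,\cdot)\in H^1_k(0,1)$ and $\gamma\in[0,1)$) to dominate $\int_0^1 w^2\,dx$ by $\int_0^1\bigl(k w_x^2+\frac{(x-x_0)^2}{k}w^2\bigr)\,dx$, one obtains
\[
\int_{T/4}^{3T/4}\!\!\int_{a_1}^{A}\!\!\int_0^1 w^2\,dxdadt \le C_\delta\Bigl(\int_q w^2\,dtdadx+\int_0^1\!\!\int_0^\delta w_T^2\,dadx\Bigr).
\]

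\emph{Small ages via \eqref{453} and \eqref{w}.} For $(t,a)\in[T/4,3T/4]\times[0,a_1]$ the Carleman weight is useless since $e^{2s\varphi}\to 0$ as $a\to 0^+$, but $a\le a_1\le t+(A-T)$ places us in the second branch of \eqref{453}, giving $w(t,a,\cdot)=S(T-t)w_T(T-t+a,\cdot)+\int_t^T S(l-t)\beta(l,a,\cdot)w(l,0,\cdot)\,dl$. Boundedness of $(S(t))_{t\ge 0}$ and of $\beta$, together with \eqref{w} (which, combined with $T<\delta$, yields $\int_0^T\|w(l,0,\cdot)\|^2\,dl\le\int_0^\delta\|w_T\|^2$), give $\|w(t,a,\cdot)\|^2\le C\|w_T(T-t+a,\cdot)\|^2+C\int_0^\delta\|w_T\|^2$. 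The calibration of $a_1$ ensures $T-t+a\in[T/4,\delta]\subset(0,\delta)$, so integrating over $(t,a,x)$ produces $\int_{T/4}^{3T/4}\int_0^{a_1}\int_0^1 w^2\le C_\delta\int_0^\delta\int_0^1 w_T^2$; adding this to the previous display yields
\[
\int_{T/4}^{3T/4}\!\!\int_0^{A}\!\!\int_0^1 w^2\,dxdadt \le C_\delta\Bigl(\int_q w^2+\int_0^1\!\!\int_0^\delta w_T^2\Bigr).
\]

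\emph{Time energy and conclusion.} Set $v(t,a,x):=w(T-t,a,x)$; then $v$ solves a forward age-structured equation with dissipative $(kv_x)_x$ and $\mu$ terms. Multiplying by $v$ and integrating on $(0,A)\times(0,1)$, using $v(t,A,x)=0$ and a Young bound on the birth source that absorbs the boundary contribution $-\int_0^1 v^2(t,0,x)\,dx$, one obtains $\frac{d}{dt}\|v(t)\|^2_{L^2(Q_A)}\le C\|v(t)\|^2_{L^2(Q_A)}$. Gr\"onwall then gives $\|w(0)\|^2_{L^2(Q_A)}=\|v(T)\|^2\le e^{Ct}\|w(t)\|^2_{L^2(Q_A)}$ for every $t\in[0,T]$; averaging over $t\in[T/4,3T/4]$ and substituting the bound displayed above yields \eqref{obser-ineq}. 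The delicate part of the whole argument is the second step: the Carleman weight $e^{2s\varphi}$ collapses as $a\to 0^+$, so recovering the sharp right-hand side $\int_0^\delta w_T^2$ (rather than the useless $\int_0^A w_T^2$) hinges on the interplay between the explicit formulas \eqref{453} and \eqref{w}, the strict inequality $T<\delta$, and the precise choice $a_1=\delta-3T/4$, which conspire to keep every $w_T$-evaluation inside the observed window $(0,\delta)$.
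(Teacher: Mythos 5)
Your proposal is correct and follows essentially the same route as the paper: the same splitting of ages at $\delta-3T/4$, the same use of the implicit formulas \eqref{453}--\eqref{w} for small ages, the same Hardy--Poincar\'e plus Carleman \eqref{main-Carle} argument (with boundedness of $s^{3}\Theta^{3}e^{2s\Phi}$) for large ages, and an energy step to reach $t=0$. The only cosmetic differences are that you run the energy estimate via time reversal and Gr\"onwall instead of the paper's $e^{\kappa t}w$ multiplier, and you note (correctly) that only the second branch of \eqref{453} is needed on the small-age region.
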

\begin{proof}
Let $w$ be a solution of \eqref{adj-sys}. Then for $\kappa>0$ to be defined later, $\widetilde{w}=e^{\kappa t}w$ is a
solution of
\begin{align}
&{{\partial \widetilde{w}} \over {\partial t}} + {{\partial \widetilde{w}} \over {\partial a}}+(k(x)\widetilde{w}_{x})_{x}-(\mu(t,a, x)+\kappa)\widetilde{w} =-\beta \widetilde{w}(t,0,x),\nonumber \\
&\widetilde{w}(t,a, 1)=\widetilde{w}(t,a, 0)=0,\label{obse-1-3}\\
&\widetilde{w}(T,a, x)=e^{\kappa T}w_{T}(a, x),\nonumber \\
&\widetilde{w}(t,A, x)=0.\nonumber
\end{align}
We point out that the parameter $\kappa$ considered here is not the same as in \eqref{poids}. Multiplying the first equation of \eqref{obse-1-3} by $\widetilde{w}$ and integrating
by parts on \\$Q_{t}=(0,t)\times(0,A)\times(0,1)$. Then, one obtains
\begin{eqnarray}\label{obs-1-4}
\nonumber &&-\frac{1}{2}\int_{Q_{A}}\widetilde{w}^{2}(t,a,x)dadx+\frac{1}{2}\int_{Q_{A}}w^{2}(0,a,x)dadx
+\frac{1}{2}\int_{0}^{1}\int_{0}^{t}\widetilde{w}^{2}(\tau,0,x)d\tau dx
\\ \nonumber &&+\kappa\int_{0}^{1}\int_{0}^{A}\int_{0}^{t}\widetilde{w}^{2}(\tau,a,x)d\tau dadx \leq
\frac{\|\beta\|_{\infty}^{2}}{4\epsilon^{'}}\int_{0}^{1}\int_{0}^{A}\int_{0}^{t}\widetilde{w}^{2}(\tau,a,x)d\tau dadx
\\ &&+\epsilon^{'}A\int_{0}^{1}\int_{0}^{t}\widetilde{w}^{2}(\tau,0,x)d\tau dx.
\end{eqnarray}
Thus, for $\kappa=\frac{\|\beta\|_{\infty}^{2}}{4\epsilon^{'}}$ and $\epsilon^{'}<\frac{1}{2A}$, one gets after integration over $(\frac{T}{4}, \frac{3T}{4})$
\begin{eqnarray}\label{obs-1-5}
\int_{Q_{A}}w^{2}(0,a,x)dadx\leq C_{12}e^{2\kappa T}\int_{Q_{A}}\int_{\frac{T}{4}}^{\frac{3T}{4}}w^{2}(t,a,x)dadx.
\end{eqnarray}
On the other hand, let us prove that there exists a positive constant $C_{\delta}$ such that
\begin{eqnarray}\label{obs1}
\int_{0}^{1}\int_{0}^{\delta-\frac{3T}{4}}\int_{\frac{T}{4}}^{\frac{3T}{4}} w^{2}(t,a,x)dtdadx\leq C_{\delta}\int_{0}^{1}\int_{0}^{\delta} w_{T}^{2}(a,x)dadx.
\end{eqnarray}
For this purpose, we will use the implicit formula of $w$ defined by \eqref{453} and we shall discuss the two cases, namely the case when $a>t+(A-T)$ and when $a\leq t+(A-T)$. In fact, if $a>t+(A-T)$ one has
\begin{eqnarray}\label{W}
\nonumber w(t,a,\cdot)&=&\int_{0}^{A-a}S(A-a-l)\beta(t,A-l,\cdot)w(t,0,\cdot)dl
\\ \nonumber &=& \int_{0}^{A-a}S(A-a-l)\beta(t,A-l,\cdot)S(T-t)w_{T}(T-t,\cdot)dl,
\end{eqnarray}
using \eqref{w}. Since $(S(t))_{t\geq 0}$ is a bounded semi-group and $\beta \in L^{\infty}(Q)$, one can see that for $T\in(0, \delta)$
\begin{eqnarray}\label{W1}
\nonumber && \int_{0}^{1}\int_{0}^{\delta-\frac{3T}{4}}\int_{\frac{T}{4}}^{\frac{3T}{4}} w^{2}(t,a,x)dtdadx
\\ \nonumber &&\leq \widetilde{C}_{10}\int_{0}^{1}\int_{\frac{T}{4}}^{\frac{3T}{4}}w_{T}^{2}(T-t,x)dtdx
\\ &&\leq \widetilde{C}_{10}\int_{0}^{1}\int_{0}^{\delta}w_{T}^{2}(m,x)dmdx,
\end{eqnarray}
Now, if $a\leq t+(A-T)$ one has
\begin{eqnarray} \label{W2}
\nonumber w(t,a,\cdot)&=&S(T-t)w_{T}(T+(a-t),\cdot)+\int_{t}^{T}S(l-t)\beta(l,a,\cdot)w(l,0,\cdot)dl
\\ \nonumber &=&S(T-t)w_{T}(T+(a-t),\cdot)+\int_{t}^{T}S(l-t)\beta(l,a,\cdot)S(T-l)w_{T}(T-l,\cdot)dl.
\end{eqnarray}
Thanks to the same argument employed to get \eqref{W1}, we conclude that
\begin{eqnarray}\label{W3}
\nonumber && \int_{0}^{1}\int_{0}^{\delta-\frac{3T}{4}}\int_{\frac{T}{4}}^{\frac{3T}{4}} w^{2}(t,a,x)dtdadx
\\ \nonumber && \leq 2\widetilde{C}_{11}(\int_{0}^{1}\int_{0}^{\delta-\frac{3T}{4}}\int_{\frac{T}{4}}^{\frac{3T}{4}}w_{T}^{2}(T+(a-t),x)dtdadx
\\ &&+\int_{0}^{1}\int_{t}^{T}w_{T}^{2}(T-l,x)dldx).
\end{eqnarray}
On one hand, we can check that
\begin{eqnarray}\label{W4}
\nonumber && \int_{0}^{1}\int_{0}^{\delta-\frac{3T}{4}}\int_{\frac{T}{4}}^{\frac{3T}{4}}w_{T}^{2}(T+(a-t),x)dtdadx
\\ \nonumber && \leq \int_{0}^{1}\int_{0}^{\delta-\frac{3T}{4}}\int_{\frac{T}{4}}^{\frac{3T}{4}}w_{T}^{2}(a+m,x)dmdadx
\\ \nonumber && \leq \int_{0}^{1}\int_{0}^{\delta-\frac{3T}{4}}\int_{a+\frac{T}{4}}^{a+\frac{3T}{4}}w_{T}^{2}(z,x)dzdadx
\\ \nonumber && \leq \int_{0}^{1}\int_{0}^{\delta-\frac{3T}{4}}\int_{0}^{\delta}w_{T}^{2}(z,x)dzdadx
\\ && \leq \delta\int_{0}^{1}\int_{0}^{\delta}w_{T}^{2}(z,x)dzdx.
\end{eqnarray}
On the other hand, we have the following inequality
\begin{eqnarray}\label{W5}
\nonumber && \int_{0}^{1}\int_{t}^{T}w_{T}^{2}(T-l,x)dldx=\int_{0}^{1}\int_{0}^{T-t}w_{T}^{2}(z,x)dzdx
\\ && \leq\int_{0}^{1}\int_{0}^{\delta}w_{T}^{2}(z,x)dzdx.
\end{eqnarray}
Combining the inequalities \eqref{W3}, \eqref{W4} and \eqref{W5} we get
\begin{eqnarray}\label{W6}
\nonumber && \int_{0}^{1}\int_{0}^{\delta-\frac{3T}{4}}\int_{\frac{T}{4}}^{\frac{3T}{4}} w^{2}(t,a,x)dtdadx
\\ \nonumber && \leq\widetilde{C}_{12}\int_{0}^{1}\int_{0}^{\delta}w_{T}^{2}(z,x)dzdx.
\end{eqnarray}
Subsequently, \eqref{obs1} occurs in both studied cases. Therefore, in the light of inequality \eqref{obs-1-5} we conclude that
\begin{eqnarray}\label{obs-1-6}
\nonumber &&\int_{Q_{A}}w^{2}(0,a,x)dadx\leq \widetilde{C}_{13}\int_{0}^{1}\int_{0}^{\delta}w_{T}^{2}(a,x)dadx
\\ && +\frac{2e^{2\kappa T}}{T}\int_{0}^{1}\int_{\delta-\frac{3T}{4}}^{A}\int_{\frac{T}{4}}^{\frac{3T}{4}}w^{2}(t,a,x)dtdadx.
\end{eqnarray}
Now, let $p$ defined by \eqref{p}. Then, using the hypotheses \eqref{hyp-k} on $k$  the function $x\mapsto\frac{(x-x_{0})^{2}}{p(x)}$ is nonincreasing in the left of $x_{0}$ and nondecreasing in the right of $x_{0}$.
Hence, applying Hardy-Poincar\'{e} inequality (see \cite[Proposition 2.3]{genni}) and taking into account the definition of $\varphi$ stated in \eqref{weight-functions} we have
\begin{eqnarray}\label{obs-1-7}
\nonumber &&\int_{Q_{A}}w^{2}(0,a,x)dadx\leq
\widetilde{C}_{13}\int_{0}^{1}\int_{0}^{\delta}w_{T}^{2}(a,x)dadx
\\ && +C_{\delta}^{13}\int_{0}^{1}\int_{\delta-\frac{3T}{4}}^{A}\int_{\frac{T}{4}}^{\frac{3T}{4}}s\Theta k(x) w_{x}^{2}(t,a,x)e^{2s\varphi}dtdadx.
\end{eqnarray}
Therefore, using Carleman estimate \eqref{main-Carle} we infer
\begin{eqnarray}\label{obs-1-8}
\nonumber && \int_{Q_{A}}w^{2}(0,a,x)dadx\leq
\widetilde{C}_{\delta}^{15}\left(\int_{q}s^{3}\Theta^{3}w^{2}e^{2s\Phi}dtdadx
+\int_{0}^{1}\int_{0}^{\delta}w_{T}^{2}(a,x)dadx\right),
\end{eqnarray}
and then the proof is finished using the fact that $\sup_{(t,a,x)\in Q}s^{d}\Theta^{d}e^{2s\Phi}<+\infty \hspace{0.25cm} , \,\forall d\in\mathbb{R}$.
\end{proof}
\subsection{Null controllability result}
In the previous paragraph, we obtained the observability inequality of system \eqref{adj-sys}. Such a tool will be very useful to prove the null controllability of the model \eqref{405} in the case where $T\in(0,\delta)$ as we emphasized in the introduction. Our main result is provided in the following theorem
\begin{theorem}\label{null-contr-result-1}
Assume that the dispersion coefficient $k$ satisfies \eqref{hyp-k} and the natural rates $\beta$ and $\mu$ verify \eqref{hyp-beta-mu}. Let $A, T>0$ be given such that $0<T<\delta$, where $\delta\in(0,A)$ small enough. For all $y_{0} \in L^{2}(Q_{A})$, there exists a control $\vartheta \in L^{2}(q)$ such that the associated solution of \eqref{405}  verifies
\begin{equation}\label{406}
y(T,a, x)=0, \quad  \text{ a.e. in } (\delta, A)\times(0,1).
\end{equation}
Furthermore, there exists a positive constant $C_{10}$ which depends on $\delta$ such that $\vartheta$ satisfies the following inequality.
\begin{equation}\label{control-estimate}
\int_{q}\vartheta^{2}(t,a,x) dtdadx \leq C_{10}\int_{Q_{A}}y_{0}^{2}(a,x)dadx.
\end{equation}
 $C_{10}$ is called the control cost.
\end{theorem}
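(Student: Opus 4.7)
I would prove null controllability by the classical Hilbert Uniqueness Method (HUM) in its Fursikov--Imanuvilov penalized form, using the observability inequality \eqref{obser-ineq} of Proposition \ref{prop-obser-ineq} as the sole analytic input.

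The first step is to establish the duality identity relating controls and adjoint states. Multiplying the state equation in \eqref{405} by the solution $w$ of \eqref{adj-sys} associated with a terminal datum $w_T\in L^{2}(Q_A)$, integrating by parts over $Q$, and exploiting (i) the Dirichlet conditions in $x$, (ii) the terminal/initial conditions in $t$, (iii) the renewal relation $w(t,A,\cdot)=0$ together with the nonlocal birth law $y(t,0,\cdot)=\int_{0}^{A}\beta(t,a,\cdot)y\,da$, the boundary contributions cancel exactly and one arrives at
\begin{equation*}
\int_{Q_{A}} y(T)\,w_T\,da\,dx\;-\;\int_{Q_{A}} y_0\,w(0)\,da\,dx\;=\;\int_{q}\vartheta\,w\,dt\,da\,dx.
\end{equation*}

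The second step is the penalized minimization. For each $\epsilon>0$ I would introduce on $L^{2}(Q_{A})$ the convex, continuous functional
\begin{equation*}
J_{\epsilon}(w_T)\;:=\;\tfrac{1}{2}\!\int_{q}\!w^{2}\,dt\,da\,dx\;+\;\tfrac{1}{2}\!\int_{0}^{1}\!\!\int_{0}^{\delta}\!w_T^{2}\,da\,dx\;+\;\epsilon\,\bigl\|w_T\bigr\|_{L^{2}((\delta,A)\times(0,1))}\;+\;\int_{Q_{A}}y_{0}\,w(0)\,da\,dx,
\end{equation*}
where $w$ solves \eqref{adj-sys} with terminal value $w_T$. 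Coercivity is exactly where \eqref{obser-ineq} enters: Cauchy--Schwarz gives
\begin{equation*}
\Bigl|\int_{Q_{A}} y_{0}\,w(0)\Bigr|\;\leq\;\|y_{0}\|_{L^{2}(Q_{A})}\,\sqrt{C_{\delta}}\left(\int_{q}w^{2}\,dt\,da\,dx\;+\;\int_{0}^{1}\!\!\int_{0}^{\delta}w_T^{2}\,da\,dx\right)^{\!1/2},
\end{equation*}
so $J_{\epsilon}$ is bounded below and coercive on $L^{2}(Q_{A})$, and by strict convexity admits a unique minimizer $\widehat{w}_T^{\,\epsilon}$ with associated adjoint solution $\widehat{w}_{\epsilon}$. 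The Euler--Lagrange relation in subdifferential form, compared with the duality identity applied to $\vartheta_{\epsilon}:=\widehat{w}_{\epsilon}\chi_{\omega}$, identifies the state $y_{\epsilon}$ of \eqref{405} driven by $\vartheta_{\epsilon}$; testing separately against functions supported in $(0,\delta)\times(0,1)$ and in $(\delta,A)\times(0,1)$ yields $y_{\epsilon}(T)=-\widehat{w}_T^{\,\epsilon}$ on the former and, most importantly, $\|y_{\epsilon}(T)\chi_{(\delta,A)\times(0,1)}\|_{L^{2}}\leq\epsilon$ on the latter.

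The last step is the passage $\epsilon\to 0$. From $J_{\epsilon}(\widehat{w}_T^{\,\epsilon})\leq J_{\epsilon}(0)=0$ combined with \eqref{obser-ineq} one extracts the $\epsilon$-uniform bound $\|\vartheta_{\epsilon}\|_{L^{2}(q)}^{2}\leq C_{\delta}\|y_{0}\|_{L^{2}(Q_{A})}^{2}$. Extracting a weakly convergent subsequence $\vartheta_{\epsilon}\rightharpoonup\vartheta$ in $L^{2}(q)$ and using the well-posedness Theorem \ref{410} to pass to the limit in \eqref{405}, the limit state $y$ satisfies \eqref{406} because $y_{\epsilon}(T)\chi_{(\delta,A)\times(0,1)}\to 0$ strongly in $L^{2}$; the bound \eqref{control-estimate} is inherited from the uniform estimate. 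I expect the only delicate point to be the weak continuity of the nonlocal trace $y_{\epsilon}(t,0,x)=\int_{0}^{A}\beta\,y_{\epsilon}\,da$ under the passage to the limit, but since $\vartheta_{\epsilon}$ is prescribed (the problem is linear in $y$ once the control is fixed) and $\beta\in L^{\infty}(Q)$, this reduces to standard weak-to-weak continuity of the affine solution map supplied by Theorem \ref{410}.
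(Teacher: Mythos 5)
Your proposal is correct, and it reaches the conclusion by the dual (HUM) formulation rather than the primal penalization the paper uses. The paper fixes $\varepsilon>0$ and minimizes over controls the functional $J_{\varepsilon}(\vartheta)=\frac{1}{2\varepsilon}\int_{0}^{1}\int_{\delta}^{A}y^{2}(T)\,da\,dx+\frac{1}{2}\int_{q}\vartheta^{2}$, characterizes the minimizer by the optimality system \eqref{438}--\eqref{439} (so the adjoint terminal datum is produced a posteriori as $\frac{1}{\varepsilon}y_{\varepsilon}(T)\chi_{(\delta,A)}$), and then multiplies \eqref{439} by $y_{\varepsilon}$ — i.e.\ uses exactly your duality identity in disguise — together with \eqref{obser-ineq} and Young's inequality to obtain the uniform bounds \eqref{442}, before passing to the weak limit. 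You instead minimize the conjugate functional over the adjoint terminal data $w_{T}$, with the relaxation term $\epsilon\|w_{T}\|_{L^{2}((\delta,A)\times(0,1))}$ and the extra quadratic term $\frac12\int_{0}^{1}\int_{0}^{\delta}w_{T}^{2}$ mirroring the right-hand side of \eqref{obser-ineq}; coercivity, the uniform control bound and the estimate $\|y_{\epsilon}(T)\|_{L^{2}((\delta,A)\times(0,1))}\le\epsilon$ then come from the observability inequality and the subdifferential optimality condition. The two routes are equivalent in substance — both use \eqref{obser-ineq} as the sole analytic input and weak compactness plus Theorem \ref{410} to pass to the limit, and both yield \eqref{406} and the cost bound \eqref{control-estimate} with $C_{10}$ proportional to $C_{\delta}$ — but your version makes explicit why the $\int_{0}^{1}\int_{0}^{\delta}w_{T}^{2}$ term in \eqref{obser-ineq} is harmless (it corresponds to the age range where $y(T)$ is not targeted), whereas the paper encodes this automatically by restricting the penalty to $(\delta,A)$. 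Two small caveats: your appeal to strict convexity for uniqueness of the minimizer is not literally justified (strict convexity in $w_{T}$ would require an injectivity/unique-continuation property of $w_{T}\mapsto w|_{q}$ on $(\delta,A)$), but uniqueness is never needed — existence of a minimizer suffices; and, as you note, the limit passage only uses that the control-to-state map is affine and bounded (Theorem \ref{410}), so the nonlocal birth term causes no difficulty.
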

Before proceeding to the proof of Theorem \ref{null-contr-result-1}, we shall make the following remark:
\begin{remark}\label{control-remark}
The inequality \eqref{control-estimate} shows us clearly that the control that we are looking for depends on $\delta$ and the initial distribution $y_{0}$.
 \end{remark}
\proof
Let $ \varepsilon>0$ and consider the following cost function
 $$J_{\varepsilon}(\vartheta)=\frac{1}{2\varepsilon}\int_{0}^{1}\int_{\delta}^{A}y^{2}(T,a, x)dadx
+\frac{1}{2}\int_{q}\vartheta^{2}(t,a,x) dtdadx.$$
We can prove that $J_{\varepsilon}$ is continuous, convex and coercive. Then, it admits at least one minimizer $\vartheta_{\varepsilon}$ and we have
\begin{eqnarray}\label{438}
\vartheta_{\varepsilon}=-w_{\varepsilon}(t,a,x)\chi_{\omega}(x) \hspace{0.25cm} \text{ in } Q,
\end{eqnarray}
with $w_{\varepsilon}$ is the solution of the following system
\begin{eqnarray}\label{439}
{{\partial w_{\varepsilon}} \over {\partial t}} + {{\partial w_{\varepsilon}} \over {\partial a}}+(k(x)(w_{\varepsilon})_{x})_{x}-\mu(t,a, x)w_{\varepsilon} =-\beta w_{\varepsilon}(t,0,x) && \text{ in } Q,\\
\nonumber  w_{\varepsilon}(t,a, 1)=w_{\varepsilon}(t,a, 0)=0  && \text{ on }(0,T)\times (0,A),\\
\nonumber  w_{\varepsilon}(T,a, x)=\frac{1}{\varepsilon}y_{\varepsilon}(T,a, x)\chi_{(\delta, A)}(a) && \text{ in }Q_{A},\\
\nonumber  w_{\varepsilon}(t,A, x)=0 && \text{ in } Q_{T},
\end{eqnarray}
and $y_{\varepsilon}$ is the solution of the system \eqref{405} associated to the control $\vartheta_{\varepsilon}$. Multiplying \eqref{439} by $y_{\varepsilon}$, integrating over $Q$, using \eqref{438} and the Young inequality we obtain
\begin{eqnarray}\label{440}
\nonumber & &
\frac{1}{\varepsilon}\int_{0}^{1}\int_{\delta}^{A}y_{\varepsilon}^{2}(T,a, x)dadx+\int_{q}\vartheta^{2}_{\varepsilon}(t,a,x) dtdadx\\ \nonumber&=&\int_{Q_{A}}y_{0}(a,x)w_{\varepsilon}( 0, a, x)dadx \\ \nonumber& & \leq \frac{1}{4C_{\delta}}\int_{Q_{A}}w_{\varepsilon}^{2}( 0, a, x)dadx +C_{\delta}\int_{Q_{A}}y_{0}^{2}(a,x)dadx,
\end{eqnarray}
with $C_{\delta}$ is the constant of the observability inequality  \eqref{obser-ineq}. This again leads to
\begin{eqnarray}\label{441}
\nonumber & &\frac{1}{\varepsilon}\int_{0}^{1}\int_{\delta}^{A}y_{\varepsilon}^{2}(T,a, x)dadx+\int_{q}\vartheta^{2}_{\varepsilon}(t,a,x) dtdadx \leq \frac{1}{4}\int_{q}w^{2}dtdadx +C_{\delta}\int_{Q_{A}}y_{0}^{2}(a,x)dadx.
\end{eqnarray}
Keeping in the mind \eqref{438}, we conclude that
\begin{eqnarray}
\frac{1}{\varepsilon}\int_{0}^{1}\int_{\delta}^{A}y_{\varepsilon}^{2}(T,a, x)dadx+\frac{3}{4}\int_{q}\vartheta^{2}_{\varepsilon}(t,a,x) dtdadx \leq C_{\delta}\int_{Q_{A}}y_{0}^{2}(a,x)dadx.
\end{eqnarray}
Hence, it follows that
\begin{equation}\label{442}
\begin{cases}
\int_{0}^{1}\int_{\delta}^{A}y_{\varepsilon}^{2}(T,a, x)dadx \leq \varepsilon C_{\delta}\int_{Q_{A}}y_{0}^{2}(a,x)dadx\\
\int_{q}\vartheta^{2}_{\varepsilon}(t,a,x) dtdadx \leq \frac{4C_{\delta}}{3}\int_{Q_{A}}y_{0}^{2}(a,x)dadx.
\end{cases}
\end{equation}
 Then, we can extract two subsequences of $y_{\varepsilon}$ and $\vartheta_{\varepsilon}$ denoted also by $\vartheta_{\varepsilon}$
 and $y_{\varepsilon}$ that converge weakly towards $\vartheta$ and $y$ in $L^{2}(q)$ and $L^{2}((0, T)\times(0, A); H^{1}_{k}(0, 1))$ respectively. Now, by a variational technic,
  we prove that $y$ is a solution of  \eqref{405}  corresponding to the control  $\vartheta$ and, by the first estimate of \eqref{442}, $y$ satisfies \eqref{406} for $T\in(0,\delta)$ and this shows our claimed Theorem\ref{null-contr-result-1}
\endproof

\section{Appendix}\label{fourth-section}
As we said in the introduction, this Appendix is concerned with a result which plays an important role to show the $\omega$-Carleman estimate associated to the full adjoint system \eqref{adj-sys} namely the Caccioppoli's inequality which is stated in the following lemma
\begin{lemma}\label{Caccio-1}Let $\omega^{'}\subset\subset\omega$ and $w$ be the solution of \eqref{inter-adjoint sys-2}. Suppose that $x_{0}\notin \overline{\omega^{'}}$. Then, there exists a positive constant $C$ such that $w$ verifies
\begin{equation}\label{Caccio}
\int_{\omega^{'}}\int_{0}^{A}\int_{0}^{T}w_{x}^{2}e^{2s\varphi} dtdadx\leq C\left(\int_{q}s^{2}\Theta^{2}w^{2}e^{2s\varphi} dtdadx+\int_{q}h^{2}
e^{2s\varphi}dtdadx\right).
\end{equation}
\end{lemma}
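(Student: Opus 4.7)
The plan is a classical Caccioppoli energy argument, tailored so that the degeneracy of $k$ at $x_0$ is completely avoided. Since $x_0\notin\overline{\omega'}$ and $\omega'\Subset\omega$, I pick a smooth cut-off $\xi:[0,1]\to[0,1]$ with $\xi\equiv 1$ on $\omega'$, $\mathrm{supp}\,\xi\Subset\omega$, and $x_0\notin\mathrm{supp}\,\xi$. On the compact set $\mathrm{supp}\,\xi$ the coefficient $k$ is bounded below by some $k_0>0$ and the ratio $(x-x_0)^2/k(x)$ is bounded above, so in particular $k(x)\varphi_x^2=c_1^2\Theta^2(x-x_0)^2/k(x)\le C\Theta^2$ on $\mathrm{supp}\,\xi$. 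This single bound is the reason the degenerate weight never appears in the estimates.

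The computation itself proceeds by multiplying the equation in \eqref{inter-adjoint sys-2} by the test function $\xi^2 e^{2s\varphi}w$ and integrating over $Q$. All boundary contributions from the integration by parts vanish: at $t=0,T$ and at $a=0$ because $\Theta\to+\infty$ together with $\psi<0$ forces $e^{2s\varphi}\to 0$; at $a=A$ because of the terminal condition $w(t,A,x)=0$; at $x=0,1$ because $\mathrm{supp}\,\xi\Subset(0,1)$. What remains is an identity whose left-hand side is $\int_Q \xi^2 k w_x^2 e^{2s\varphi}\,dt\,da\,dx$ and whose right-hand side consists of a cross term $-\int_Q kw_x w\,(\xi^2 e^{2s\varphi})_x$, time and age drift terms $-s\int_Q(\varphi_t+\varphi_a)\xi^2 w^2 e^{2s\varphi}$, a $\mu$-term $-\int_Q \mu\,\xi^2 w^2 e^{2s\varphi}$, and the source term $-\int_Q h w\,\xi^2 e^{2s\varphi}$.

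The final step is to expand $(\xi^2 e^{2s\varphi})_x=2\xi\xi_x e^{2s\varphi}+2s\xi^2\varphi_x e^{2s\varphi}$ and apply Young's inequality to each $kw_x w\cdot(\cdot)$ piece with a small absorption parameter, so that a multiple of the left-hand side can be moved back. The remainders are controlled by using that $k\xi_x^2$ is bounded on $\mathrm{supp}\,\xi_x\subset\mathrm{supp}\,\xi$; that $s^2 k\xi^2\varphi_x^2\le Cs^2\Theta^2$ by the key bound above; that $|\varphi_t|,|\varphi_a|\le C\Theta^{5/4}\le C\Theta^2$ by direct differentiation of $\Theta(t,a)=[t(T-t)]^{-4}a^{-4}$; that $\mu\in L^\infty$ is harmless since $1\le Cs^2\Theta^2$ for $s\ge 1$ ($\Theta$ being bounded below on $[0,T]\times[0,A]$); and that $|hw|\le\tfrac12 h^2+\tfrac12 w^2$ delivers the $h^2$ integral. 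Since $k\ge k_0$ and $\xi\equiv 1$ on $\omega'$, division by $k_0$ yields \eqref{Caccio}.

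The main point requiring care, rather than a genuine obstacle, is the choice of $\xi$: it must be identically $1$ on $\omega'$, supported inside $\omega$, and yet avoid $x_0$ entirely. This is possible exactly because $x_0\notin\overline{\omega'}$ together with $\omega'\Subset\omega$ guarantees that $\overline{\omega'}$ is a compact subset of the open set $\omega\setminus\{x_0\}$, within which a standard partition-of-unity construction produces such a $\xi$; once this is in place, the remainder is routine bookkeeping with the explicit form of $\Theta$ and $\varphi$.
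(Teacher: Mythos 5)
Your proof is correct and follows essentially the same route as the paper's: a cut-off--localized energy (Caccioppoli) argument that uses the vanishing of $e^{2s\varphi}$ as $t\to 0^+,T^-$ and $a\to 0^+$ together with $w(t,A,x)=0$ to kill all boundary terms, then Young's inequality, the boundedness of $k\varphi_x^2=c_1^2\Theta^2(x-x_0)^2/k$, and the lower bound of $k$ on $\overline{\omega'}$. The only cosmetic difference is that the paper's cut-off is supported in all of $\omega$ and both $x$-derivatives are moved onto the weight (so no absorption of a $w_x$ cross term is needed), whereas you choose the support to avoid $x_0$ and absorb that cross term; both are routine variants yielding \eqref{Caccio}.
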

\begin{proof} Define the following smooth cut-off function $\zeta:\mathbb{R}\rightarrow \mathbb{R}$
\begin{equation}\label{81}
\left\{
\begin{array}{l}
0\leq\zeta(x)\leq1, \hspace{0.5cm} x \in \mathbb{R},\\
\zeta(x)=0, \hspace{0.5cm}   x<x_1 \text{ and } x>x_2,\\
\zeta(x)=1, \hspace{0.5cm}   x \in \omega^{'}.\\
\end{array}
\right.
\end{equation}
For the solution  $w$ of \eqref{inter-adjoint sys-2}, we have
\begin{eqnarray}\label{131}
\nonumber & & 0=\int_{0}^{T}\frac{d}{dt}\left[\int_{0}^{1}\int_{0}^{A}
\zeta^{2}e^{2s\varphi}w^{2}dadx\right]dt
\\ \nonumber &=&2s \int_{0}^{1}\int_{0}^{A}
\int_{0}^{T}\zeta^{2}\varphi_{t}w^{2}e^{2s\varphi}dtdadx
+2\int_{0}^{1}\int_{0}^{A}
\int_{0}^{T}\zeta^{2}ww_{t}
e^{2s\varphi}dtdadx
\\ \nonumber &=&2s\int_{0}^{1}\int_{0}^{A}
\int_{0}^{T}\zeta^{2}\varphi_{t}w^{2}
e^{2s\varphi}dtdadx+
2\int_{0}^{1}\int_{0}^{A}\int_{0}^{T}\zeta^{2}w(-(kw_{x})_{x}-w_{a}+h+\mu w)e^{2s\varphi}dtdadx.
\end{eqnarray}
Then, integrating by parts we obtain
\begin{eqnarray*}\label{82}
\nonumber 2\int_{Q}k\zeta^{2}e^{2s\varphi}w_{x}^{2}dtdadx&=&-2s\int_{Q}\zeta^{2}w^{2}\psi(\Theta_{a}+
\Theta_{t})e^{2s\varphi}dtdadx-2\int_{Q}\zeta^{2}whe^{2s\varphi}dtdadx\\ &&-2\int_{Q}\zeta^{2}\mu w^{2}e^{2s\varphi}dtdadx
+\int_{Q}(k(\zeta^{2}e^{2s\varphi})_{x})_{x}w^{2}dtdadx.
\end{eqnarray*}
On the other hand, by the definitions of $\zeta$, $\psi$ and $\Theta$, thanks to Young inequality, taking $s$ quite large and using the fact that $x_{0}\notin \overline{\omega^{'}}$, one can prove the existence of a positive constant $c$ such that
\begin{eqnarray}\label{88}
\nonumber&&2\int_{Q}k\zeta^{2}e^{2s\varphi}w_{x}^{2}dtdadx\geq 2\min_{x\in\omega^{'}}k(x)\int_{\omega^{'}}\int_{0}^{A}\int_{0}^{T}w_{x}^{2}e^{2s\varphi}dtdadx,\\
\nonumber&&\int_{Q}(k(\zeta^{2}e^{2s\varphi})_{x})_{x}w^{2}dtdadx\leq c\int_{\omega}\int_{0}^{A}\int_{0}^{T}s^{2}\Theta^{2}w^{2}e^{2s\varphi}dtdadx,\\
\nonumber&&-2s\int_{Q}\zeta^{2}w^{2}\psi(\Theta_{a}+\Theta_{t})e^{2s\varphi}dtdadx\leq c\int_{\omega}\int_{0}^{A}\int_{0}^{T}s^{2}\Theta^{2}w^{2}e^{2s\varphi}dtdadx,\\
\nonumber&&-2\int_{Q}\zeta^{2}whe^{2s\varphi}dtdadx\leq c\left(\int_{\omega}\int_{0}^{A}\int_{0}^{T}s^{2}\Theta^{2}w^{2}e^{2s\varphi}dtdadx+\int_{\omega}\int_{0}^{A}\int_{0}^{T}h^{2}e^{2s\varphi}dtdadx\right),\\
\nonumber&&-2\int_{Q}\zeta^{2}\mu w^{2}e^{2s\varphi}dtdadx \leq c\int_{\omega}\int_{0}^{A}\int_{0}^{T}s^{2}\Theta^{2}w^{2}e^{2s\varphi}dtdadx.
\end{eqnarray}
This all together imply that there is $ C>0$ such that
\begin{eqnarray*}
& &\int_{\omega^{'}}\int_{0}^{A}\int_{0}^{T}w_{x}^{2} e^{2s\varphi}dtdadx\leq C\left(\int_{q}s^{2}\Theta^{2}w^{2}e^{2s\varphi} dtdadx+\int_{q}h^{2}
e^{2s\varphi}dtdadx\right).
\end{eqnarray*}
Thus, the proof is achieved.
\end{proof}
\begin{remark}\label{rem-cacc}
The Lemma \ref{Caccio-1} remains true for any function $\pi \in  C([0,1], (-\infty, 0))\cap C^{1}([0,1]\backslash \{x_{0}\}, (-\infty, 0))$ and verifying
\begin{eqnarray}
|\pi_{x}|\leq\frac{c}{\sqrt{k}}, \hspace{0.5cm} \text{ for } x\in[0,1]\backslash\{x_{0}\},
\end{eqnarray}
where $c>0$. see \cite[Proposition 4.2]{genni} for more details.
\end{remark}
\textbf{{\emph{Acknowledgements}}}
\\The authors would like to thank deeply the anonymous referee and the Professors B. Ainseba and L. Maniar for their fruitful and several remarks which allow us to realize this work.

\end{document}